\numberwithin{equation}{section}
\newcommand{\e}{\epsilon}
\newcommand{\de}{\delta}
\newcommand{\br}{\mathbb{R}}
\newcommand{\pa}{\partial}
\newcommand{\bt}{\beta}
\newcommand{\al}{\alpha}
\newcommand{\la}{\lambda}
\newcommand{\om}{\omega}
\newcommand{\coi}{C_0^{\infty}}
\newcommand{\be}{\begin{equation}}
\newcommand{\ee}{\end{equation}}
\newcommand{\dd}{\text{d}}
\newcommand{\WF}{\text{WF}}
\newcommand{\frec}{f_\e^{\text{rec}}}
\newcommand{\freco}{f_\e^{\text{rec-1}}}
\newcommand{\frect}{f_\e^{\text{rec-2}}}
\newcommand{\frecth}{f_\e^{\text{rec-3}}}
\newcommand{\alst}{\alpha_\star}
\newcommand{\pst}{p_\star}
\newcommand{\rhost}{\rho_\star}
\newcommand{\kst}{k_\star}
\newcommand{\sst}{\s_\star}
\newcommand{\bma}{\begin{pmatrix}}
\newcommand{\ema}{\end{pmatrix}}
\newcommand{\s}{\mathcal S}
\newcommand{\R}{\mathcal R}
\newcommand{\mP}{\mathcal P}
\newcommand{\mps}{\mathcal P_*}
\newcommand{\CH}{\mathcal H}
\def\bt#1{\textcolor{black}{#1} }
\newtheorem{theorem}{Theorem}[section]
\newtheorem{lemma}[theorem]{Lemma}
\newtheorem{corollary}[theorem]{Corollary}
\theoremstyle{definition}
\newtheorem{assumptions}[theorem]{Assumptions}
\begin{document}

\title[Analysis of view aliasing]{Analysis of view aliasing for the generalized Radon transform in $\br^2$}
\author[A Katsevich]{Alexander Katsevich$^1$}
\thanks{$^1$This work was supported in part by NSF grant DMS-1906361. Department of Mathematics, University of Central Florida, Orlando, FL 32816 (Alexander.Katsevich@ucf.edu). }

\begin{abstract} In this paper we consider the generalized Radon transform $\mathcal R$ in the plane. Let $f$ be a piecewise smooth function, which has a jump across a smooth curve $\mathcal S$. We obtain a formula, which accurately describes view aliasing artifacts away from $\mathcal S$ when $f$ is reconstructed from the data $\mathcal R f$ discretized in the view direction. The formula is asymptotic, it is established in the limit as the sampling rate $\epsilon\to0$. The proposed approach does not require that $f$ be band-limited. Numerical experiments with the classical Radon transform and generalized Radon transform (which integrates over circles) demonstrate the accuracy of the formula. 
\end{abstract}
\maketitle

\section{Introduction}\label{sec_intro}

Resolution of image reconstruction from discrete data is one of the fundamental questions in imaging. The most direct approach to estimating resolution utilizes the notions of the point spread function (PSF) and modulation transfer function (MTF) \cite[Sections 12.2, 12.3]{eps08}. This and other similar approaches allow rigorous theoretical analysis of only the simplest settings, such as inversion of the classical Radon transform. For the most part, resolution of reconstruction in more difficult settings (e.g., inversion of the cone beam transform) is analyzed by heuristic arguments, numerically, or via measurements \cite{LAH2007, Grimmer2008, FFS2013}.

Sampling theory provides a \bt{related} approach to investigating resolution \cite{kr89, des93, nat93, nat95, pal95, cb97, fr00, far04,  rs07, izen12}. Consider, for example, the classical Radon transform in $\br^2$
\be
\hat f(\al,p)=\int_{\br^2} f(x)\de(\vec\al\cdot x-p)\dd x,\ \vec\al=(\cos\al,\sin\al).
\ee
The corresponding discrete data are
\be
\hat f(\al_k,p_j),\ \al_k=\bar\al+k\Delta\al,\ p_j=\bar p+j\Delta p,\ \al_k\in[0,2\pi),j\in\mathbb Z,
\ee
for some fixed $\bar\al$, $\bar p$ and $\Delta\al$, $\Delta p$. Assume that $f$ is essentially band-limited (in the classical sense). This means that, with high accuracy,  its Fourier transform $\tilde f(\xi)$ is supported in some ball $|\xi|\le B$. The sampling theory predicts the rates $\Delta\al$, $\Delta p$ with which $\hat f(\al,p)$ should be sampled, so that reconstruction of $f$ from discrete data does not contain aliasing artifacts. Since the essential band-limit $B$ is related to the size of the smallest detail in $f$, a typical prescription of the theory can be loosely formulated as follows: given the size of the smallest detail in $f$, the minimal sampling rates to avoid aliasing are $\Delta\al$, $\Delta p$. Alternatively, the theory determines the size of the smallest detail in $f$ that can be resolved given the rates $\Delta\al$, $\Delta p$.

A microlocal approach to sampling was developed recently \cite{stef20, Monard2021, Stef_angles_2022}. In this approach $f$ is assumed to be band-limited in the semiclassical sense (i.e., the semiclassical wavefront set $\WF_h(f)$ is compact). \bt{Alternatively, the assumption is that the data represent discrete values of the convolution $w*\R f$. Here $\R$ is the generalized Radon transform, and $w$ is a semiclassically bandlimited mollifier. The mollifier models the detector aperture function.} The goal is to accurately recover the semiclassical singularities of $f$ and avoid aliasing. If the sampling requirement is violated, the theory predicts the location and frequency of aliasing artifacts. 



In \cite{Katsevich2017a, kat19a, Katsevich2020b, Katsevich2020a, Katsevich2021a}, the author developed an \bt{alternative} analysis of resolution (we call it Local Resolution Analysis, or LRA).
The main results in these papers are simple expressions describing the reconstruction from discrete \bt{values of $\R f$ or $w*\R f$} in a neighborhood of the singularities of $f$ in a variety of settings. We call these expressions the Discrete Transition Behavior (DTB). The DTB provides a direct, quantitative link between the sampling rate and resolution. In these papers such a link is established for a wide range of integral transforms, conormal distributions $f$, and reconstruction operators. In \cite{Katsevich2021b, Katsevich2022a} LRA was generalized to objects with rough boundaries in $\br^2$. \bt{Neither $f$ nor the mollifier $w$ (if applied) is required to be bandlimited.} 

Suppose $\Delta p=\e$ and $\Delta\al=\kappa\e$, where $\kappa>0$ is fixed. The DTB is an accurate approximation of the reconstruction in an $\e$-neighborhood of the singular support of $f$ in the limit as $\e\to0$. Therefore, the DTB provides much more than a single measure of resolution (e.g., the size of the smallest detail that can be resolved). Given the DTB function, the user may decide in a fully quantitative way what sampling rate is required to achieve a user-defined reconstruction quality. The notion of quality may include resolution (which can be described in any desired way) and/or any other requirement the user desires. Thus, the LRA answers precisely the question of the required sampling rate to guarantee the required resolution (understood broadly). 

The only item missing from the LRA until now was analysis of aliasing. 
Some earlier results on the analysis of aliasing artifacts (more precisely, view aliasing artifacts) are in \cite{JS1980} and \cite[Section 12.3.2]{eps08}. They include an approximate formula for the artifacts far from a small, radially symmetric object. More recent results are in \cite{stef20, Monard2021, Stef_angles_2022}. These include the prediction of the location and frequency of the artifacts, qualitative analysis of the artifacts generated by various edges (e.g., flat, convex, and a corner), as well as their numerical illustrations.

In this paper we generalize the LRA to the analysis of view aliasing. We call it the Localized Aliasing Analysis, or LAA. Our main result is Theorem~\ref{main res}, where a precise, quantitative formula describing aliasing artifacts is stated. The formula is asymptotic, it is established in the limit as the sampling rate $\e\to0$ (which is the same assumption as in \cite{stef20, Monard2021, Stef_angles_2022}). Similarly to the LRA, the LAA is very flexible. In this paper we consider the generalized Radon transform in $\br^2$ and apply it to functions with jump discontinuities across smooth curves. Similarly to \cite{Katsevich2017a, kat19a, Katsevich2020b, Katsevich2020a, Katsevich2021a}, we believe that the LAA is generalizable, and that it is capable of predicting aliasing artifacts for a wide range of integral transforms, conormal distributions $f$, and reconstruction operators. 

\color{black}
To avoid confusion, we clarify the meaning of the terms “resolution” and “aliasing” used in this paper. For simplicity, we will use the example of a jump discontinuity across a smooth curve $\s$. Resolution at $x_0\in\s$ means the extent to which the boundary at the jump (i.e., $\s$) is blurred when the image is reconstructed in a neighborhood of $x_0$ from discrete data. This blurring is accurately described by the DTB function mentioned above. The derivation of the DTB function accounts for possible artifacts that may arise due to aliasing from the parts of $\s$ in a neighborhood of $x_0$. In other words, {\it LRA treats local aliasing as part of resolution analysis}. In this paper, the term {\it “aliasing” stands for rapidly oscillating artifacts away from $\s$ that are caused by aliasing from $\s$.}
\color{black}

The paper is organized as follows. In section~\ref{sec:a-prelims} we describe the set-up, formulate the assumptions, and state the main result -- Theorem~\ref{main res}. This theorem provides a simple formula that describes aliasing artifacts. \bt{We also discuss various quantities used in the formula, and state a corollary that describes what the formula looks like in the case of the classical Radon transform.} The proof of Theorem~\ref{main res} is in section~\ref{sec:proof}. Section~\ref{sec:PSI props} establishes a few useful properties of the function $\Psi$, in terms of which the artifacts are described. An algorithm for computing $\Psi$ numerically is in Section~\ref{sec:PSI numer}. Section~\ref{sec:numerics} contains numerical experiments with the classical and generalized Radon transforms. The latter integrates over circles. Details of implementation, which illustrate the use of the theorem, are provided. All experiments demonstrate a good match between reconstruction and prediction. Proofs of some lemmas are in appendix~\ref{sec:proofs of lemmas}.

\section{Preliminaries}\label{sec:a-prelims}

\subsection{Generalized Radon transform}\label{sec:gen fn}
Let $p=\mps(\al,x)$ be a defining function for the generalized Radon transform $\R$:
\be\label{grt def}
\hat f(\al,p)=\int_{\s_{\al,p}} W(\al,p;x) f(x)\dd A,\ \s_{\al,p}:=\{x\in\br^2:\mps(\al,x)=p\}, \al\in\Omega,p\in\br,
\ee
where $W\in C^\infty(\Omega\times\br\times U)$ is some (known) integration weight, $\dd A$ is the length element on the curve $\s_{\al,p}$, \bt{$U\subset\br^2$ is a small open set,} and $\Omega\subset\br$ is a small interval. 
Similarly to the classical Radon transform, we think about $\al$ as the polar angle, and $p$ - as the affine variable. However, since we consider the generalized Radon transform, these variables admit many alternative interpretations. \bt{See \cite{Kuchment2005, AmbQ2020} for more information and references about generalized Radon transforms, their properties and applications.}

Let $\s$ be a $C^\infty$ curve. Let $(\alst ,\pst)$ be a pair such that $\s_{\alst ,\pst}$ is tangent to $\s$ at some $y_0\in \s\bt{\cap U}$. \bt{To simplify notation, denote $\sst:=\s_{\alst ,\pst}$.} We will compute a reconstruction in a small neighborhood of some point $x_0\not\in\sst$.  
Let $H(y)=0$ be an equation for $\s$ in a neighborhood of $y_0$. The function $H$ is smooth, and $\dd H(y)\not=0$, $y\in\s$. Multiplying $H$ by a constant if necessary, we can assume that $\mps$ satisfies the equations
\be\label{Phi eqs}
\mps(\alst,x_0)=\mps(\alst,y_0)=\pst,\ \dd_x\mps(\alst,y_0)=\dd H(y_0).
\ee

\begin{assumptions}[Properties of $\mps$]\label{ass:Phi}
$\hspace{1cm}$
\begin{enumerate}
\item $\mps\in C^\infty(\Omega\times \bt{U})$, and $\dd_x\mps(\al,x)\not=0$, $x=x_0,y_0$;
\item Equations \eqref{Phi eqs} hold;
\item $\pa_\al \mps(\alst,x_0)\not=\pa_\al \mps(\alst,y_0)$ \bt{(the Bolker condition);}
\item One has 
\be\label{p der cond}
M:=(\vec\Theta_0^\perp\cdot\pa_y)^2(\mps(\alst,y)-H(y))|_{y=y_0}>0,
\ee
where $\vec\Theta_0^\perp$ is a unit vector orthogonal to $\dd H(y_0)$; and
\item \bt{There exists $c>0$ such that $y_0\not\in\s_{\al,p}$ for any $\al\in\Omega$ and $|p-\pst|\ge c$.
}
\end{enumerate}
\end{assumptions}

Assumption~\ref{ass:Phi}(4) \bt{is equivalent to the condition that the curvatures of $\s$ and $\sst$ at $y_0$ are not equal. In other words, the order of contact between $\s$ and $\sst$ is one (and not higher). For example, if one of the two curves is flat at $y_0$, then $M\not=0$ as long as the other one is not flat.} The requirement that $M$ be positive is not restrictive. If $M<0$, we can flip the $p$-axis and replace $H\to-H$, $\mps\to-\mps$ to make $M$ positive. The essential requirement is that $M\not=0$. 

The requirement $M>0$ means that $\s_{\alst,\pst+\de}$ intersects $\s$ at two points near $y_0$ when $\de>0$, and does not intersect $\s$ near $y_0$ -- if $\de<0$ (see Figure~\ref{fig:assns}).  \bt{In what follows we set
\be
\vec\Theta_0:=\pm\dd H(y_0),
\ee
and the sign ($+$ or $-$) is selected so that $\vec\Theta_0$ points towards the part of $\s_{\alst,\pst+\de}$, $0<\de\ll1$, located between its two intersection points with $\s$ (see Figure~\ref{fig:assns}).}

\bt{Shrinking, if necessary, $\Omega$ and $U$ further, we may assume that there is no other pair $(\al’,p’)\not=(\alst,\pst)$, $\al’\in\Omega$, such that $x_0\in\s_{\al’,p’}$, and $\s_{\al’,p’}$ is tangent to $\s$ at $y_0$.
}

\begin{figure}[h]
{\centerline{
{\epsfig{file={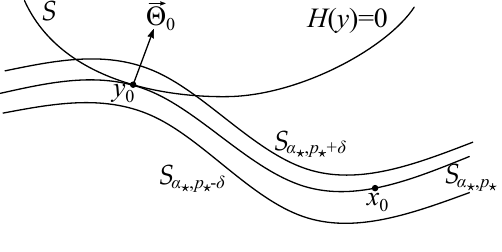}, width=8cm}}
}}
\caption{\bt{Illustration of the curves $\s$ and $\s_{\al,p}$.}}
\label{fig:assns}
\end{figure}

Let $\mP(\al)$, $\al\in\Omega$, be the function defined by the requirement that the curves $\s_{\al,\mP(\al)}$ be tangent to $\s$ in a neighborhood of $y_0$. \bt{Figure~\ref{fig:Pal} illustrates the function $\mP(\al)$ in the case of the classical Radon transform (left panel) and the generalized Radon transform that integrates over circles (right panel). The circles have arbitrary radii and centers on a given curve $z(\al)\in\Gamma$, $\al\in\Omega$. Consider the latter case. Suppose, for example, that $\s$ is a circle with radius $r$ and center $a$. Then, globally, there are two such functions: $\mP(\al)=|z(\al)-a|\pm r$. See also Section~\ref{sec:grt_exp} for more details about the circular Radon transform.}

The following simple lemma is proven in appendix~\ref{sec:Psm}.

\begin{figure}[h]
{\centerline{\hbox{
{\epsfig{file={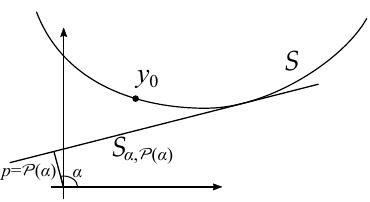}, width=6cm}}
{\epsfig{file={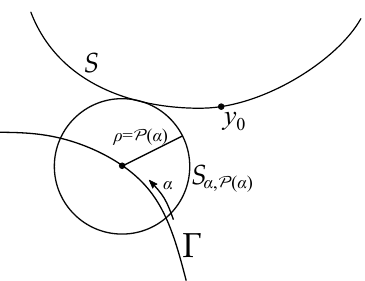}, width=6cm}}
}}}
\caption{\bt{Illustration of the function $\mP(\al)$. Left panel - the classical Radon transform that integrates over lines. Right panel - the generalized Radon transform that integrates over circles with centers on a given curve parametrized by $\al$ (denoted $\Gamma$ in the figure).}}
\label{fig:Pal}
\end{figure}

\begin{lemma}\label{lem:two Ps} For a sufficiently small neighborhood $\Omega\ni\alst$, one has
\be\label{two derivs}
\mP(\alst )=\pst,\ \mP(\al)\in C^\infty(\Omega),\
\bt{\pa_\al\mps(\alst ,y_0)=\mP’(\alst).}
\ee
\end{lemma}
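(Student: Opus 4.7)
The plan is to apply the implicit function theorem. Parametrize $\s$ locally near $y_0$ by a smooth curve $y=\gamma(t)$ with $\gamma(t_0)=y_0$ and $\gamma'(t_0)=\vec\Theta_0^\perp$. Tangency of $\s_{\al,p}$ to $\s$ at the point $\gamma(t)$ is equivalent to the pair of equations
\begin{align*}
F_1(\al,t,p) &:= \mps(\al,\gamma(t))-p = 0,\\
F_2(\al,t)   &:= \tfrac{d}{dt}\mps(\al,\gamma(t))\ =\ \dd_x\mps(\al,\gamma(t))\cdot\gamma'(t) = 0.
\end{align*}
Both vanish at $(\alst,t_0,\pst)$: the first by \eqref{Phi eqs}, and the second because $\dd_x\mps(\alst,y_0)=\dd H(y_0)$ is perpendicular to $\gamma'(t_0)=\vec\Theta_0^\perp$.

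I would then invoke the IFT to solve for $(t,p)$ as smooth functions of $\al$ near $\alst$. Computing the Jacobian $\pa_{(t,p)}(F_1,F_2)$ at the base point, the entries $\pa_t F_1=F_2=0$, $\pa_p F_1=-1$, $\pa_p F_2=0$ reduce the determinant to $\pa_t F_2|_{(\alst,t_0)}=\frac{d^2}{dt^2}\mps(\alst,\gamma(t))|_{t=t_0}$. Expanding this second derivative by the chain rule and subtracting the corresponding expansion of the identity $H(\gamma(t))\equiv 0$, the terms $\dd_x\mps(\alst,y_0)\cdot\gamma''(t_0)$ and $\dd H(y_0)\cdot\gamma''(t_0)$ cancel thanks to $\dd_x\mps(\alst,y_0)=\dd H(y_0)$, and what remains is exactly $(\vec\Theta_0^\perp\cdot\pa_y)^2(\mps(\alst,y)-H(y))|_{y=y_0}=M$. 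Since $M>0$ by Assumption~\ref{ass:Phi}(4), the Jacobian is invertible, so the IFT yields smooth $t(\al)$, $p(\al)$ with $t(\alst)=t_0$, $p(\alst)=\pst$. Setting $\mP(\al):=p(\al)$ gives the first two assertions in \eqref{two derivs}.

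For the derivative identity, I would differentiate $\mP(\al)=\mps(\al,\gamma(t(\al)))$ at $\al=\alst$:
\[
\mP'(\alst) = \pa_\al\mps(\alst,y_0) + \bigl[\dd_x\mps(\alst,y_0)\cdot\gamma'(t_0)\bigr]\, t'(\alst).
\]
The bracketed factor vanishes since $\dd_x\mps(\alst,y_0)=\dd H(y_0)\perp\vec\Theta_0^\perp=\gamma'(t_0)$, giving $\mP'(\alst)=\pa_\al\mps(\alst,y_0)$. The only step that demands real care is identifying the Jacobian determinant with $M$; but this reduces cleanly to a second-order Taylor computation in which the tangency condition in Assumption~\ref{ass:Phi}(2) kills the $\gamma''(t_0)$ contribution, so no genuine obstacle remains.
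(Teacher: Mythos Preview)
Your proof is correct and follows essentially the same approach as the paper's: apply the implicit function theorem to the tangency conditions, with the nondegeneracy hypothesis supplied by Assumption~\ref{ass:Phi}(4) (i.e., $M\neq 0$), and then differentiate $\mP(\al)=\mps(\al,y(\al))$ using that $y'(\al)$ is tangent to $\s$. The only cosmetic difference is that the paper works with the system $H(y)=0$, $\lambda\,\dd H(y)=\dd_y\mps(\al,y)$ in the unknowns $(y,\lambda)$, whereas you pre-parametrize $\s$ by $\gamma(t)$ and solve for $(t,p)$; your version has the advantage of making the identification of the Jacobian determinant with $M$ fully explicit.
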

\bt{
From assumptions~\ref{ass:Phi}(1, 3) and Lemma~\ref{lem:two Ps},  
\be\label{two derivs 1}
u_0:=\dd_x\mps(\alst ,x_0)\not=0,\ \mu_0:=\pa_\al(\mps(\alst ,x_0)-\mP(\alst ))\not=0.
\ee}

\subsection{Remaining assumptions and main result}

Consider a function $f(x)$ on the plane, $x\in\br^2$. We suppose that 
\begin{assumptions}[Properties of $f$]\label{ass:f}
$\hspace{1cm}$
\begin{enumerate}
\item $\text{supp}(f)\subset U$, and $\text{diam}(U)$ is sufficiently small;
\item There exist open sets $D_\pm$ and functions $f_\pm\in C^\infty(\br^2)$ such that
\begin{equation}\label{f_def}\begin{split}
& f(x)\equiv f_-(x),\ x\in D_-,\  f(x)\equiv f_+(x),\ x\in D_+,\\
& D_-\cap D_+=\varnothing,\ D_-\cup D_+=U\setminus \s;
\end{split}
\end{equation}
and
\item $\s\cap U$ is a $C^\infty$ curve.
\end{enumerate}
\end{assumptions}

\noindent
Thus, $\text{sing\,supp}(f)\subset\s$. In general, $f_-(x)\not=f_+(x)$, $x\in\s$, so $f$ may have a jump across $\s$. Note that whether $x_0\in U$ or not is irrelevant. Also, when $U$ shrinks towards $y_0$, $\s$ does not change. Thus, $\s\cap U$ is a small segment of $\s$ around $y_0$. With this understanding, in what follows we do not distinguish between $\s$ and $\s\cap U$. 

Similarly to \cite{Stef_angles_2022}, we consider semi-discrete data
\be\label{data_eps}
\hat f_\e(\al_k,p):=\int w_\e(p-s)\hat f(\al_k,s)\dd s,\  
\al_k:=k\Delta\al,\ p\in\br,\ w_\e(t):=\e^{-1}w(t/\e),
\ee
where $w$ is a mollifier (e.g., the detector aperture function), $\Delta\al=\kappa\e$, and $\kappa>0$ is fixed. \bt{It is reasonable to assume that the support of $w_\e$ is
 of size $O(\e)$, because sampling rates along $\al$ and $p$ are usually of the same order of magnitude.}

\begin{assumptions}[Assumptions about the mollifier $w$]\label{ass:w} 
$\hspace{1cm}$
\begin{enumerate}
\item \bt{$w$ is compactly supported and $w’\in L^q(\br)$ for some $q>2$;} and 
\item $\int w(p)\dd p=1$.
\end{enumerate}
\end{assumptions}
Hence, the data \eqref{data_eps} represent the integrals of $f$ along thin strips around $\s_{\al_k,p}$, and their width ($=O(\e)$) is determined by $\e$ and the support of $w$. In the ideal case \bt{(not considered in this paper)}, where $w$ is the Dirac $\de$-function, the data represent the integrals of $f$ along $\s_{\al_k,p}$.

Reconstruction from the data \eqref{data_eps} is achieved by the formula
\be\label{recon-orig}
\frec(x)=-\frac{\Delta\al}{2\pi}\sum_{\al_k\in\Omega} \frac{\omega(\al_k,x)}\pi \int \frac{\pa_p\hat f_\e(\al_k,p)}{p-\mps(\al_k,x)}\dd p,\ x\in U’,
\ee
where $U’$ is a small neighborhood of $x_0$, and $\omega\in C^\infty(\Omega\times U’)$ is some weight function. This is a discretized (in $\al$) version of the classical FBP inversion formula \cite{nat3} adapted to the generalized Radon transform in $\br^2$ (e.g., as it was done in \cite{Beylkin1984, kat10b}). The integral with respect to $p$, which is understood in the principal value sense, is the filtering step (the Hilbert transform). The exterior sum is a quadrature rule corresponding to the backprojection integral.


\bt{
To better understand \eqref{recon-orig}, we consider its continuous analogue. Suppose $w$ is the $\delta$-function. The continuous version of \eqref{recon-orig} reads 
\be\label{cont inv}
f^{\text{rec}}=\R^* (\CH\pa_p)\R f.
\ee
Here $\R^*$ is a weighted adjoint transform, and $\CH$ is the Hilbert transform acting with respect to $p$. By imposing additional restrictions on $\mps$, $\omega$, and $W$ we can ensure that $\R^* (\CH\pa_p)\R$ is a $\Psi$DO of order zero (see e.g. \cite{qu-80,Beylkin1984}) with some other desired properties (e.g., elliptic, principal symbol equal 1). 
We do not do this, since our focus here is only the reconstruction of rapidly oscillating artifacts in $\frec$ away from $\s$. In particular, no attempt is made to achieve exact reconstruction. In view of this we impose only a minimal set of conditions that guarantee that Theorem~\ref{main res} holds. These conditions do not guarantee that $\R^* (\CH\pa_p)\R$ is a $\Psi$DO.}

Introduce the following functions:
\be\label{psi simp}\begin{split}
\psi(\hat q):=&(1/2)\int_0^\infty w(\hat q+\hat p)\hat p^{-1/2}\dd \hat p,\ \bt{\hat q\in\br,}\\
\Psi(h;a,r):=&\sum_{\bt{k\in\mathbb Z}} \left[\psi\left(a(k-r)+h\right)-\psi\left(a(k-r)\right)\right],\ \color{black}h,a,r\in\br,a\not=0,\\ 
\color{black}\Psi(h;0,r):=&\bt{0,\ h,r\in\br},
\end{split}
\ee
and
\be\label{f jump}\begin{split}
\Delta f(y_0)=&\lim_{\e\to0^+}(f(y_0+\e\vec\Theta_0)-f(y_0-\e\vec\Theta_0)).
\end{split}
\ee
\bt{Various properties of $\psi$ and $\Psi$ (e.g., that $\psi$ is continuous and decays sufficiently fast, so that the series in the definition of $\Psi$ is absolutely convergent) are established in Sections~\ref{sec:lead term} and \ref{sec:PSI look}.}
  Our main result is as follows.

\begin{theorem}\label{main res}
Suppose $W\in C^\infty(\Omega\times\br\times U)$, and $\omega\in C^\infty(\Omega\times U’)$ for some small open sets $\Omega\ni\alst$, $U\ni y_0$, and $U’\ni x_0$. Under the assumptions~\ref{ass:Phi}, \ref{ass:f}, and \ref{ass:w}, one has 
\be\label{recon-2}\begin{split}
\e^{-1/2}&(\frec(x_0+\e\check x)-\frec(x_0))
=c\Psi(u_0\cdot\check x;\kappa\mu_0,\kst )+O(\e^{1/2}),\ \e\to0,\\
c:=&-\frac{\kappa\omega(\alst,x_0)W(\alst,\pst;y_0)}{\pi}(2/M)^{1/2}\, \Delta f(y_0),\\ 
\color{black}\kst:=&\color{black}\alst /\Delta\al,\ \kappa:=\Delta\al/\e, 
\color{black}
\end{split}
\ee
where $M$ is defined in \eqref{p der cond}, \bt{$u_0$ and $\mu_0$ are defined \eqref{two derivs 1},} and the $O(\e^{1/2})$ term is uniform with respect to $\check x$ confined to any bounded set.
\end{theorem}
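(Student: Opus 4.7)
The plan is to isolate the contribution of the tangency $(\alst, \pst)$ to the discrete backprojection \eqref{recon-orig} and reduce it to the aliasing series defining $\Psi$. First I would use Assumption~\ref{ass:Phi}(5) together with the unique-tangency condition stated just before Lemma~\ref{lem:two Ps} to localize the sum: for $\al_k$ outside a fixed small neighborhood of $\alst$, the integrand $\partial_p \hat f_\e(\al_k, p)/(p - \mps(\al_k, x))$ is smooth in $(p, x)$ on the region relevant for reconstruction at $x_0$, so its $x$-variation over the $\e$-ball around $x_0$ contributes only $O(\e)$ to $\frec(x_0 + \e\check x) - \frec(x_0)$, which is $O(\e^{1/2})$ after division.

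For $\al_k$ near $\alst$, I would derive a local normal form for $\hat f(\al, p)$ near $p = \mP(\al)$. Working in coordinates aligned with $\vec\Theta_0$ and $\vec\Theta_0^\perp$ at $y_0$ and expanding both $\s$ and $\s_{\al, p}$ to second order, Assumption~\ref{ass:Phi}(4) gives that the arc of $\s_{\al, p}$ lying in $D_+$ near $y_0$ has length $2\sqrt{2(p - \mP(\al))_+/M} + O(p - \mP(\al))$, yielding
\[
\hat f(\al, p) = \hat f_{\mathrm{sm}}(\al, p) + A(\al)(p - \mP(\al))_+^{1/2} + \text{lower-order singular},
\]
with $A(\alst) = 2W(\alst, \pst; y_0)\,\Delta f(y_0)\sqrt{2/M}$. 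Convolving the singular part with $w_\e$ and rescaling $p = \mP(\al_k) + \e\hat q$ produces a profile of the form $\e^{1/2}$ times the auxiliary function $\hat q \mapsto \int_0^\infty w(\hat q - u)u^{1/2}\dd u$; an integration by parts identifies its derivative with $\psi(-\cdot)$, converting the $u^{1/2}$-weight into the $u^{-1/2}$-weight defining $\psi$ in \eqref{psi simp}. Applying the Hilbert-type kernel $1/(p - \mps(\al_k, x))$ and performing a further rescaling then yields the leading contribution of the $k$-th summand as a prefactor times $\psi$ evaluated at the scaled separation $(\mps(\al_k, x) - \mP(\al_k))/\e$.

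For the full sum, Lemma~\ref{lem:two Ps} and \eqref{two derivs 1} yield the expansions $(\mps(\al_k, x_0) - \mP(\al_k))/\e = \kappa\mu_0(k - \kst) + O(\e)$ (using $\al_k = k\Delta\al$, $\kst = \alst/\Delta\al$, $\kappa = \Delta\al/\e$) and $(\mps(\al_k, x_0 + \e\check x) - \mps(\al_k, x_0))/\e = u_0 \cdot \check x + O(\e)$. Forming the difference $\frec(x_0 + \e\check x) - \frec(x_0)$, the smooth contributions (from $\hat f_{\mathrm{sm}}$ and the slow $\al_k$-dependence of $\omega$, $W$, and $A$) cancel to $O(\e)$, while the singular contributions yield the telescoping series
\[
\sum_{k \in \mathbb Z}\bigl[\psi(\kappa\mu_0(k - \kst) + u_0\cdot\check x) - \psi(\kappa\mu_0(k - \kst))\bigr] = \Psi(u_0\cdot\check x; \kappa\mu_0, \kst),
\]
with prefactor collecting $-(\kappa/\pi)\,\omega(\alst, x_0)\,A(\alst) = c$.

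The main obstacle will be uniform-in-$k$ control of the approximation errors, so that the truncation of the physical sum $\{\al_k \in \Omega\}$ to all $k \in \mathbb Z$, together with the replacement of each summand by its leading $\psi$-term, yields a remainder that is $O(\e^{1/2})$ and uniform in $\check x$ over bounded sets. Since $\psi(\hat q) \sim |\hat q|^{-1/2}$ as $\hat q \to -\infty$, the series $\sum_k \psi(\kappa\mu_0(k - \kst))$ is not absolutely convergent, and the proof must exploit the $|k|^{-3/2}$ decay of the differences $\psi(\cdot + h) - \psi(\cdot)$, which rests on the regularity of $\psi$ provided by Assumption~\ref{ass:w}(1), to absorb both the lower-order singular corrections to $\hat f$ and the $O(\e)$ corrections in the Taylor expansions above into a single, uniform $O(\e^{1/2})$ error.
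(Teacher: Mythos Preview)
Your plan follows the paper's route: expand $\hat f(\al,p)$ near the tangency as a leading $A(\al)(p-\mP(\al))_+^{1/2}$ term plus a $(p-\mP(\al))_+^{3/2}$ correction plus a smooth remainder, feed each piece through the filtered backprojection, and show that only the first yields the $\Psi$-series. Your coefficient $A(\alst)$ matches the paper's $\varphi_1(\alst)$, and the overall architecture is the same. The one place your outline would fail as written is the expansion $(\mps(\al_k,x_0)-\mP(\al_k))/\e = \kappa\mu_0(k-\kst)+O(\e)$: this bound is \emph{not} uniform in $k$. Since $\al_k-\alst=\kappa\e(k-\kst)$, the quadratic Taylor remainder divided by $\e$ is $R_k=O\bigl(\e(k-\kst)^2\bigr)$, which at the edge of $\Omega$ (where $|k-\kst|\sim 1/\e$) is of order $1/\e$, not $\e$. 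The paper controls this by writing $\Delta\psi(a_k+R_k)-\Delta\psi(a_k)=R_k\,\Delta\psi'(a_k+\xi_k)$ and invoking $\psi''(\hat q)=O(|\hat q|^{-5/2})$ --- one order of decay beyond the $|k|^{-3/2}$ you cite --- so that $\sum_k \e(k-\kst)^2/(1+|k-\kst|)^{5/2}=O(\e^{1/2})$. This is precisely where Assumption~\ref{ass:w}(1) enters: $w'\in L^q$, $q>2$, makes $\psi'$ continuous (needed for the finitely many terms with $|k-\kst|$ bounded) and guarantees the $\psi''$ asymptotics off a compact set.

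Separately, the ``lower-order singular'' piece $(p-\mP(\al))_+^{3/2}$ is not absorbed by the $\Delta\psi$-decay mechanism at all. After the Hilbert transform it produces a term of the form $\varphi(t,q)(t-q)_-^{1/2}+\text{smooth}$; the paper bounds the $x$-difference of this directly by $c\e/|\al_k-\alst|^{1/2}$ for $|\al_k-\alst|\gg\e$ (H\"older-$\tfrac12$ behavior of the square root) and sums to $O(\e^{1/2})$ by an elementary estimate. Your last paragraph correctly flags uniform-in-$k$ control as the obstacle, but the three remainders (the $x$-expansion error, the $\al$-expansion error $R_k$, and the $3/2$-power correction) require three distinct estimates, and only the first of these is governed by the $|k|^{-3/2}$ decay of $\psi(\cdot+h)-\psi(\cdot)$ that you mention.
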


\color{black}
To help the reader, we discuss various quantities occurring in \eqref{recon-2}.  
\begin{enumerate}
\item $\check x$ is a rescaled displacement from a fixed point $x_0$ to a nearby reconstruction point $x$: $\check x=(x-x_0)/\e$;
\item For the classical Radon transform (CRT), $\mps(\al,x)=\vec\al\cdot x$, where $\al$ and $\vec\al$ are related by $\vec\al=(\cos\al,\sin\al)$;
\item $\alst,\pst$ are the values such that the integration curve $\sst=\s_{\alst,\pst}$ contains $x_0$ is tangent to $\s$ at some point, denoted $y_0$ (see Figure~\ref{fig:assns});
\item $W(\al,p;y)$ and $\omega(\al,x)$ are integration weights in $\R$ and its adjoint (see \eqref{grt def}, \eqref{recon-orig}, \eqref{cont inv}, and the discussion around the latter equation). For the CRT, $W(\al,p;y)\equiv 1$ and $\omega(\al,x)\equiv1$;
\item $\kappa=\Delta\al/\e$, where $\Delta\al$ is the step-size along $\al$;
\item Up to a nonzero factor, $M$ is the difference of curvatures of $\s$ and $\sst$ at $y_0$;
\item $\Delta f(y_0)$ is the value of the jump of $f$ across $\s$ at $y_0$;
\item $\kst=\alst/\Delta\al$ is the “index” value corresponding to the angle $\alst$. We put the word index in quotes, because $\kst$ is not necessarily an integer. As is easily seen from \eqref{psi simp} and \eqref{recon-2}, only the fractional part of $\kst$ is important;
\item The quantities $u_0:=\dd_x\mps(\alst ,x_0)$ and $\mu_0:=\pa_\al(\mps(\alst ,x_0)-\mP(\alst ))$ depend on the properties of the Radon transform (via the function $\mps$) and the curve $\s$. For the CRT, $u_0=\vec\al_\star$ and $\mu_0=\vec\al_\star^\perp\cdot(x_0-y_0)$, so $|\mu_0|=|x_0-y_0|$.
\end{enumerate}

The following corollary, which follows immediately from Theorem~\ref{main res}, illustrates what eq. \eqref{recon-2} looks like in the case of the classical Radon transform.

\begin{corollary}\label{cor:CRT}
Let $\R$ be the classical Radon transform. Under the assumptions of Theorem~\ref{main res} one has 
\be\label{recon-2 CRT}\begin{split}
\e^{-1/2}&(\frec(x_0+\e\check x)-\frec(x_0))
=c\Psi(\alst\cdot\check x;\kappa\vec\al_\star^\perp\cdot(x_0-y_0),\kst )+O(\e^{1/2}),\ \e\to0,\\
c:=&-(\kappa/\pi)(2r)^{1/2}\, \Delta f(y_0),\
\kst:=\alst /\Delta\al,\ \kappa:=\Delta\al/\e, 
\end{split}
\ee
where $r$ is the radius of curvature of $\s$ at $y_0$, and the $O(\e^{1/2})$ term is uniform with respect to $\check x$ confined to any bounded set.
\end{corollary}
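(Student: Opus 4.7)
The plan is to specialize each quantity appearing in Theorem~\ref{main res} ($u_0$, $\mu_0$, $M$, $W$, and $\omega$) to the classical Radon transform setting and read off the resulting formula. Since $\mps(\al,x)=\vec\al\cdot x$, $W\equiv 1$, and $\omega\equiv 1$ for the CRT, this is essentially a substitution exercise.

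First I would compute $u_0=\dd_x\mps(\alst,x_0)=\vec\al_\star$ directly from the definition of $\mps$, giving the first argument $\vec\al_\star\cdot\check x$ of $\Psi$ in \eqref{recon-2 CRT} (as clarified in the discussion following Theorem~\ref{main res}, the shorthand $\alst\cdot\check x$ stands for $\vec\al_\star\cdot\check x$). Next, since $\pa_\al\mps(\al,x)=\vec\al^\perp\cdot x$, Lemma~\ref{lem:two Ps} yields $\mP'(\alst)=\pa_\al\mps(\alst,y_0)=\vec\al_\star^\perp\cdot y_0$, so that $\mu_0=\pa_\al\mps(\alst,x_0)-\mP'(\alst)=\vec\al_\star^\perp\cdot(x_0-y_0)$, matching the second argument of $\Psi$ in the corollary.

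The main computation is to relate $M$ in \eqref{p der cond} to the radius of curvature $r$ of $\s$ at $y_0$. From assumption~\ref{ass:Phi}(2), $\dd H(y_0)=\dd_x\mps(\alst,y_0)=\vec\al_\star$, so $\vec\Theta_0^\perp=\pm\vec\al_\star^\perp$. Since $y\mapsto\mps(\alst,y)=\vec\al_\star\cdot y$ is affine, the $\mps$-contribution to \eqref{p der cond} vanishes and $M=-(\vec\al_\star^\perp\cdot\pa_y)^2 H(y_0)$. Parametrizing $\s$ near $y_0$ by arclength along its tangent $\vec\al_\star^\perp$ and expanding $H$ to second order while using that $\vec\al_\star$ is the unit normal to $\s$ at $y_0$ gives $M=1/r$ (the sign of $H$ is fixed by the requirement $M>0$ from assumption~\ref{ass:Phi}(4)). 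Hence $(2/M)^{1/2}=(2r)^{1/2}$.

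Plugging these values, together with $W\equiv 1$ and $\omega\equiv 1$, into the formula for $c$ in \eqref{recon-2} immediately yields $c=-(\kappa/\pi)(2r)^{1/2}\Delta f(y_0)$, completing the derivation of \eqref{recon-2 CRT}. No substantive obstacle arises: the only item requiring care is the sign convention for $H$ used in computing $M$, and this is already handled by assumption~\ref{ass:Phi}(4).
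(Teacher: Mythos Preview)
Your proposal is correct and follows exactly the approach the paper takes: the paper states that the corollary ``follows immediately from Theorem~\ref{main res}'' and, in the surrounding discussion and in Section~\ref{sec:numerics}, records the same specializations $u_0=\vec\al_\star$, $\mu_0=\vec\al_\star^\perp\cdot(x_0-y_0)$, $W\equiv\omega\equiv1$, and $M=1/r$ (see \eqref{crt aux}). Your handling of the notational point $\alst\cdot\check x=\vec\al_\star\cdot\check x$ and of the sign of $H$ via assumption~\ref{ass:Phi}(4) is also in line with the paper.
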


See Section~\ref{sec:numerics} for more details on how to apply \eqref{recon-2} for the classical and circular Radon transforms. 

\color{black}

\section{Proof of Theorem~\ref{main res}}\label{sec:proof}

By \eqref{two derivs 1}, $u_0\not=0$, $\mu_0\not=0$. 
By linearity of the Radon transform, we can assume that the support of $f$ is contained in a small neighborhood of $y_0$ (i.e., by shrinking $U$ as much as necessary). By assumption~\ref{ass:Phi}(5), shrinking $U$ and $\Omega$ even more,  we can assume that there exists $c>0$ such that  
\be\label{p-der bnd}\begin{split}
&\s_{\al,p}\cap U=\varnothing\text{ for any } \al\in\Omega, |p-\pst|\ge c.
\end{split}
\ee
Then
\be\label{sing descr}
\hat f(\al,p)=\varphi_1(\al)(p-\mP(\al))_+^{1/2}+\varphi_2(\al,p)(p-\mP(\al))_+^{3/2}+\varphi_3(\al,p),\
\al\in\Omega,p\in\br,
\ee
where $\varphi_1\in \coi(\Omega)$, $\varphi_{2,3}\in C^\infty(\Omega\times\br)$, and
\be\label{phi1}
\varphi_1(\alst )=W(\alst,\pst;y_0)\Delta f(y_0)2(2/M)^{1/2}.
\ee
For the classical Radon transform this result is established in \cite{rz1, rz2}. For the generalized Radon transform it easily follows from $\dd_x\mps(\alst,y_0)\not=0$ and $M\not=0$ (see assumptions~\ref{ass:Phi}(1, 4)) by applying the method of proof of Lemma~3.5 in \cite{Katsevich2020a}. 

Since $f(x)$ is compactly supported, $\hat f(\al, p)$ is compactly supported in $p$ by \eqref{p-der bnd}. Hence we can assume that $\varphi_2(\al,p)$ is compactly supported as well, and 
\be\label{f3-prop}
\varphi_3(\al,p)\equiv -\varphi_1(\al)(p-\mP(\al))_+^{1/2},\ \al\in\Omega,|p|\ge c,
\ee
for some $c$.

The idea of the proof is to split $\hat f$ into three terms using \eqref{sing descr}, substitute each of them one by one into \eqref{data_eps}, \eqref{recon-orig}, and investigate the resulting expressions separately.

\subsection{Beginning of proof. Estimate of the leading term.}\label{sec:lead term}

Replace $\hat f(\al,s)$ with $\varphi_1(\al)(s-\mP(\al))_+^{1/2}$ in \eqref{data_eps} and substitute into \eqref{recon-orig}. After simple transformations we get
\be\label{recon-1}\begin{split}
\freco(x):=&-\frac{\Delta\al}{2\pi \e^{1/2}}\sum_{\al_k\in\Omega} \omega(\al_k,x)\varphi_1(\al_k) \psi((\mps(\al_k,x)-\mP(\al_k))/\e),\\ 
\psi(\hat q):=&(2\pi)^{-1}\int (\hat p-\hat q)^{-1}\int w(\hat p-\hat s)\hat s_+^{-1/2}\dd\hat s\dd \hat p.
\end{split}
\ee
After additional transformations with the help of the integral \eqref{key int}, $\psi$ simplifies to the expression in \eqref{psi simp}. \bt{These transformations are justified by applying $\psi$ in \eqref{recon-1} to a test function and changing the order of integration using the result in \cite[Section III.28.4]{Muskh1958}.} In turn, \eqref{psi simp} gives 
\be\label{psi props}\begin{split}
&\psi(\hat q)=0,\ \hat q>c;\quad
\bt{\psi(\hat q)\ \text{ is uniformly continuous on $\br$};}\\ 
&\psi^{(n)}(\hat q)=c_n(-\hat q)^{-(1/2)-n}+O(|\hat q|^{-(3/2)-n}),\ \hat q<-c,\hat q\to-\infty,n=0,1,2,\dots,\\
\end{split}
\ee
for some $c>0$ and $c_n$. \bt{Since $\hat p_+^{-1/2}\in L_{loc}^{q’}(\br)$ for any $q’<2$, Assumption~\ref{ass:w}(1) and \cite[Exercise 11, p. 196]{Rudin_real} imply that $\psi$ is uniformly continuous on $\br$. Note that $\psi(\hat q)$ is of limited smoothness on a compact set, outside of which $\psi$ is $C^\infty$.}


Using the notation in \eqref{two derivs 1} and \eqref{psi simp} we formulate the following result
\begin{lemma}\label{lem:first res} Under the assumptions of Theorem~\ref{main res} one has
\be\label{recon-2 lem}\begin{split}
\e^{-1/2}(\freco(x_0+\e\check x)-&\freco(x_0))\\
=&-\frac{\kappa\omega(\alst,x_0)\varphi_1(\alst )}{2\pi}\Psi(u_0\cdot\check x;\mu_0\kappa,\kst )+O(\e^{1/2}),
\end{split}
\ee
where the $O(\e^{1/2})$ term is uniform with respect to $\check x$ confined to any bounded set.
\end{lemma}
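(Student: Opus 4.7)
The plan is to substitute the explicit formula \eqref{recon-1} for $\freco$ into the left-hand side of \eqref{recon-2 lem} and pull out $\Delta\al/\e=\kappa$, reducing the claim to showing that
\begin{equation*}
S_\e := \sum_{\al_k\in\Omega}\bigl[\omega(\al_k,x_0+\e\check x)\varphi_1(\al_k)\psi(q_k(x_0+\e\check x))-\omega(\al_k,x_0)\varphi_1(\al_k)\psi(q_k(x_0))\bigr]
\end{equation*}
equals $\omega(\alst,x_0)\varphi_1(\alst)\Psi(u_0\cdot\check x;\mu_0\kappa,\kst)+O(\e^{1/2})$, where $q_k(x):=(\mps(\al_k,x)-\mP(\al_k))/\e$. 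The goal is then to bring the summand into the form appearing in the definition \eqref{psi simp} of $\Psi$, and to compare the resulting truncated sum with the infinite series.

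Next, I would Taylor-expand both arguments of $\psi$. Setting $F(\al):=\mps(\al,x_0)-\mP(\al)$, equations \eqref{Phi eqs}, \eqref{two derivs}, and \eqref{two derivs 1} give $F(\alst)=0$ and $F'(\alst)=\mu_0$. Using $\al_k-\alst=(k-\kst)\kappa\e$, Taylor expansion yields $q_k(x_0)=\mu_0\kappa(k-\kst)+\rho_k$ with $\rho_k=O(\e(k-\kst)^2)$, and $q_k(x_0+\e\check x)-q_k(x_0)=u_k\cdot\check x+O(\e)$ with $u_k=u_0+O(\e|k-\kst|)$; similarly $\omega(\al_k,\cdot)-\omega(\alst,x_0)$ and $\varphi_1(\al_k)-\varphi_1(\alst)$ are $O(\e|k-\kst|)$. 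After these replacements, $S_\e$ becomes a truncation (to $|k-\kst|\lesssim\e^{-1}$) of the $\mathbb Z$-series defining $\Psi(u_0\cdot\check x;\mu_0\kappa,\kst)$, multiplied by the prefactor $\omega(\alst,x_0)\varphi_1(\alst)$, plus perturbation errors that must be controlled.

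The hard part will be controlling these errors in the presence of $O(\e^{-1})$ terms, so that naive per-term estimates are useless and cancellation must be exploited. The key observation is that every perturbation acts inside the difference $g(\eta):=\psi(\eta+u_0\cdot\check x)-\psi(\eta)$, which by \eqref{psi props} satisfies $g(\eta)=O(|\eta|^{-3/2})$ and $g'(\eta)=O(|\eta|^{-5/2})$ as $\eta\to-\infty$, with $g\equiv0$ for $\eta\gg 1$. The mean value theorem applied to $g$ shows that $\rho_k$ contributes a per-term error $O\bigl(\e(k-\kst)^2\cdot|k-\kst|^{-5/2}\bigr)=O(\e|k-\kst|^{-1/2})$ in the regime $1\lesssim|k-\kst|\ll\e^{-1}$; summation gives $O\bigl(\e\sum_{|k-\kst|\lesssim\e^{-1}}|k-\kst|^{-1/2}\bigr)=O(\e^{1/2})$. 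The replacements $u_k\to u_0$, $\omega(\al_k,\cdot)\to\omega(\alst,x_0)$, and $\varphi_1(\al_k)\to\varphi_1(\alst)$ introduce perturbations of the same order $O(\e|k-\kst|)$ and are handled identically.

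For bounded $|k-\kst|\le N_0$ there are only $O(1)$ terms, and H\"older regularity of $\psi$ (which follows from \eqref{psi simp} together with Assumption~\ref{ass:w}(1), yielding a H\"older exponent $1-1/q>1/2$) gives a total contribution $O(\e^{1-1/q})=O(\e^{1/2})$. Completing the truncated $\Omega$-sum to a full $\mathbb Z$-sum introduces a tail bounded by $\sum_{|k-\kst|>c/\e}O(|k-\kst|^{-3/2})=O(\e^{1/2})$, which also establishes absolute convergence of the series defining $\Psi$. Collecting all these errors and multiplying by $-\kappa/(2\pi)$ yields \eqref{recon-2 lem}.
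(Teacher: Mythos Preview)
Your proposal is correct and follows essentially the same route as the paper's proof in Section~\ref{ssec:main term}: linearize the argument of $\psi$ in both $x$ and $\al$, and control the resulting perturbation errors using the decay \eqref{psi props} of $\psi$ and its derivatives together with the lower bound $|q_k(x_0)|\gtrsim|k-\kst|$. The paper organizes the bookkeeping slightly differently---it peels off the $x$-dependence of $\omega$ first, uses continuity of $\psi'$ (rather than your weaker H\"older bound) for bounded $|k-\kst|$, and leaves the coefficient freeze $\omega(\al_k,x_0)\varphi_1(\al_k)\to\omega(\alst,x_0)\varphi_1(\alst)$ and the tail extension to $k\in\mathbb Z$ implicit---but these are cosmetic differences, and your version is in fact more explicit about the last two steps.
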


\bt{The proof of the lemma is in subsection~\ref{ssec:main term}.}

\subsection{The second term}
Similarly, replace $\hat f(\al,s)$ with $\varphi_2(\al,s)(s-\mP(\al))_+^{3/2}$ in \eqref{data_eps} and substitute into \eqref{recon-orig}. After simple transformations we get with some $c$
\be\label{recon-t2}\begin{split}
\frect(x):=&c\e\sum_{\al_k\in\Omega} \omega(\al_k,x)g_2(\mps(\al_k,x),\mP(\al_k),\al_k),\ x=x_0+\e\check x,\\ 
g_2(p,q,\al):=&\int (t-p)^{-1}\pa_t\int w_\e(t-s) \varphi_2(\al,s)(s-q)_+^{3/2}\dd s \dd t,\\
&\bt{p=\mps(\al_k,x),\ q=\mP(\al_k).}
\end{split}
\ee
\bt{Therefore, in \eqref{recon-t2} $p,q$ satisfy
\be\label{pq bnd}\begin{split}
|p|\le\sup_{\al\in\Omega}|\mps(\al,x_0)|+O(\e),\ |q|\le c,
\end{split}
\ee
where $c$ is the same as in \eqref{p-der bnd}. Reducing, if necessary, $\Omega$ further, we can assume that the supremum in \eqref{pq bnd} is bounded. Thus, $|p|,|q|\le P$ for some $P<\infty$.} For simplicity, the dependence of $g_2$, $\varphi_2$, and related functions on $\al$ will be omitted from notation. Rewrite $g_2$ as follows:
\be\label{g alt}
g_2(p,q)=\int w_\e(p-t) \int  (s-t)^{-1}\pa_s \left(\varphi_2(s)(s-q)_+^{3/2}\right)\dd s \dd t.
\ee
Using the results in \cite[\S 8.3]{gakhov}, we find
\be\label{g alt v2}\begin{split}
g_2(p,q)&=\int w_\e(p-t) \left(\varphi_{2,1}(t,q)(t-q)_-^{1/2}+\varphi_{2,2}(t,q)\right) \dd t
\end{split}
\ee
for some smooth and bounded $\varphi_{2,1}$ and $\varphi_{2,2}$. The same result can be obtained by elementary means by writing 
\be\label{key int cor}
\int_0^\infty \frac{\varphi(s)}{s^{1/2}(s-\rho)}\dd s=
\varphi(\rho)\int_0^\infty \frac{\dd s}{s^{1/2}(s-\rho)}+\int_0^\infty \frac{\varphi(s)-\varphi(\rho)}{s-\rho}s^{-1/2}\dd s,
\ee
using the integral (see \cite[Equations 2.2.4.25 and 2.2.4.26]{pbm1})
\be\label{key int}
\int (s-\rho)^{-1}s_+^{-1/2}\dd s=\pi \rho_-^{-1/2},
\ee
and substituting $\rho=t-q$, $\varphi(s)=s\bigl[(3/2)\varphi_2(s+q)+s\varphi_2’(s+q)\bigr]$.

From \eqref{g alt v2} it follows that
\be\label{g2 diff}\begin{split}
&|g_2(p+\Delta p,q)-g_2(p,q)|\\
&\le O(|\Delta p|)+c\max_{|\tau|\le c\e}\left|(p-q+\Delta p+\tau)_-^{1/2}-(p-q+\tau)_-^{1/2}\right|
\end{split}
\ee
for some $c$. Recall that in \eqref{g2 diff} 
\be\label{aux vars}
p-q=\mps(\al_k,x_0)-\mP(\al_k),\ \Delta p=\mps(\al_k,x)-\mps(\al_k,x_0)=O(\e),\ \tau=O(\e), 
\ee
where $x=x_0+\e\check x$. Since $\mu_0\not=0$ (cf. \eqref{two derivs 1}), we have $|\mps(\al,x_0)-\mP(\al)|\ge c|\al-\alst |$ for any $\al\in\Omega$ and some $c>0$. Therefore, there exists $c_1>0$ such that whenever $|\al-\alst |\ge c_1\e$
and $\e>0$ is sufficiently small, the expressions $(p-q+\Delta p+\tau)_-^{1/2}$ and $(p-q+\tau)_-^{1/2}$ are either both zero or both nonzero. When they are both nonzero, the magnitude of their difference equals
\be\label{brack diff}
\frac{|\Delta p|}{|p-q+\Delta p+\tau|^{1/2}+|p-q+\tau|^{1/2}}
\le \frac{c\e}{|\al-\alst|^{1/2}},\quad |\al-\alst |\ge c_1\e,
\ee
for some $c$. Also, there are finitely many $k$ (close to $\kst $) such that $|\al_k-\alst |< c_1\e$. For those $k$, the same difference is $O(\e^{1/2})$. 

Using \eqref{g2 diff} and \eqref{brack diff} in \eqref{recon-t2}, we find similarly to \eqref{recon-2}:
\be\label{recon-t2-2}\begin{split}
&\e^{-1/2}(\frect(x_0+\e\check x)-\frect(x_0))\\
&=O(\e^{1/2})+O(\e^{1/2})\biggl[\e^{1/2}+\sum_{1\le k\le O(1/\e)} \frac{\e}{(k\e)^{1/2}}\biggr]
=O(\e^{1/2}).
\end{split}
\ee
The first $O(\e^{1/2})$ term on the right in \eqref{recon-t2-2} absorbs the contributions, which arise due to the $x$-dependence of $\om$ in \eqref{recon-t2} and due to the $O(|\Delta p|)=O(\e)$ term in \eqref{g2 diff}. Here we use that $|\om(x_0+\e\check x,\al)-\om(x_0,\al)|\le c\e$ and $|g_2(p,q,\al)|\le c$ for some $c$ and all $\al\in\Omega$, $|p|,|q|\le P$. 

\subsection{The third term}
Finally, replace $\hat f(\al,s)$ with $\varphi_3(\al,s)$ in \eqref{data_eps} and substitute into \eqref{recon-orig}. Recall that $\varphi_3$ is not necessarily compactly supported in $s$ (cf. \eqref{f3-prop}), and 
\be\label{f3 props}
\pa_s^l \varphi_3(\al,s)=O(|s|^{(1/2)-l}),\ s\to\infty,\ \al\in\Omega,\ l=0,1,2,
\ee
where the big-$O$ term is uniform in $\al$. Similarly to \eqref{recon-t2} and \eqref{g alt}, we find
\be\label{recon-t3}\begin{split}
\frecth(x):=&c\e\sum_{\al_k\in\Omega} \omega(\al_k,x)g_3(\mps(\al_k,x),\al_k),\\ 
g_3(p,\al):=&\int (t-p)^{-1}\pa_t\int w_\e(t-s) \varphi_3(\al,s)\dd s \dd t\\
=&\int w_\e(-\tau) \int  s^{-1}\pa_s \varphi_3(\al,s+\tau+p) \dd s \dd \tau,\ |p|\le P.
\end{split}
\ee

The following lemma is proven in appendix~\ref{sec:ramp phi}.
\begin{lemma}\label{lem:diff phi3} One has
\be\label{diff est 3}
\int  s^{-1}\pa_s \left[\varphi_3(\al,s+q+\Delta q)-\varphi_3(\al,s+q)\right] \dd s
=O(|\Delta q|),\ \Delta q\to0,
\ee
uniformly in $\al\in\Omega$, $|q|\le c$, for any $c$.
\end{lemma}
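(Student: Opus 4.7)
My strategy is to reduce \eqref{diff est 3} to a uniform boundedness statement about a principal value integral obtained by taking one more derivative of $\varphi_3$ via the Fundamental Theorem of Calculus. Specifically, I would write
\be
\pa_s\varphi_3(\al,s+q+\Delta q)-\pa_s\varphi_3(\al,s+q)=\int_0^{\Delta q}\pa_s^2\varphi_3(\al,s+q+t)\,\dd t,
\ee
substitute this into the left-hand side of \eqref{diff est 3}, and exchange the order of integration. The exchange is legitimate on $\{|s|>\eta\}\times[0,\Delta q]$ for any $\eta>0$, since the uniform decay $\pa_s^2\varphi_3=O(|s|^{-3/2})$ from \eqref{f3 props} makes the double integral absolutely convergent there. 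Letting $\eta\to 0^+$ and applying dominated convergence then reduces \eqref{diff est 3} to showing that $\text{p.v.}\int s^{-1}\pa_s^2\varphi_3(\al,s+q+t)\,\dd s$ is bounded uniformly in $\al\in\Omega$ and $q+t$ ranging over a bounded set; the left-hand side of \eqref{diff est 3} is then controlled by $|\Delta q|$ times this uniform bound.

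To establish the uniform bound on the PV integral, I set $G(s):=\pa_s^2\varphi_3(\al,s+q+t)$. By \eqref{sing descr} one has $\varphi_3\in C^\infty(\Omega\times\br)$, so $G$ is smooth in $s$ with $G(s)=O(|s|^{-3/2})$ at infinity, uniformly in the parameters. Choosing an even smooth cutoff $\chi$ with $\chi\equiv 1$ on $[-1,1]$ and $\mathrm{supp}\,\chi\subset[-2,2]$, and using the fact that $\text{p.v.}\int s^{-1}\chi(s)\,\dd s=0$ by symmetry, I rewrite the PV as the absolutely convergent integral $\int s^{-1}[G(s)-G(0)\chi(s)]\,\dd s$. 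Its integrand is $O(1)$ near $s=0$ thanks to the Taylor expansion $G(s)-G(0)=O(|s|)$, and it is $O(|s|^{-5/2})$ at infinity, so the integral is bounded in absolute value by a constant depending only on $\sup_{|s|\le 2}|G'(s)|$ and the implicit constant in \eqref{f3 props}; both are controlled uniformly in $\al\in\Omega$ and $q+t$ in a bounded set.

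The main obstacle I anticipate is precisely this uniform boundedness of the PV integral as the parameters $\al$ and $q+t$ vary. Its resolution hinges on the global $C^\infty$-smoothness of $\varphi_3$ on $\Omega\times\br$, which is crucial: all conormal singularities of $\hat f$ at $p=\mP(\al)$ have already been absorbed into the first two terms of the decomposition \eqref{sing descr}, so $G$ inherits no non-smooth behavior as the potential singularity location $\mP(\al)-q-t$ sweeps past the pole of $s^{-1}$ at the origin. No extra care is required at the transition $|p|=c$ that separates the smooth interior from the tail described in \eqref{f3-prop}, because both the regularity of $\varphi_3$ and the decay bound \eqref{f3 props} hold uniformly on all of $\br$.
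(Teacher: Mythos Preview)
Your argument is correct and follows essentially the same route as the paper: both proofs split the principal value into a near-origin part controlled by subtracting the value at $s=0$ and invoking the boundedness of $\pa_s^3\varphi_3$ on compact sets, and a far part controlled by the decay $\pa_s^2\varphi_3=O(|s|^{-3/2})$ from \eqref{f3 props}. The only cosmetic difference is that you extract the factor $|\Delta q|$ via the Fundamental Theorem of Calculus before bounding a single PV integral, whereas the paper splits the domain first and applies the mean value theorem in each piece.
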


Using Lemma~\ref{lem:diff phi3}, the analogue of \eqref{g2 diff} becomes (with $\Delta p$ the same as in \eqref{aux vars})
\be\label{g3 diff}\begin{split}
&|g_3(p+\Delta p,\al)-g_3(p,\al)|\\
&\le c\max_{|\tau|\le c\e}\biggl|\int s^{-1}\pa_s  \left[\varphi_3(\al,s+\tau+p+\Delta p)-\varphi_3(\al,s+\tau+p)\right]\dd s\biggr|\\
&=O(|\Delta p|)=O(\e),\ \al\in\Omega,
\end{split}
\ee
for some $c$. Hence, we obtain similarly to \eqref{recon-t2-2}:
\be\label{recon-t3-diff}
\e^{-1/2}(\frecth(x_0+\e\check x)-\frecth(x_0))=O(\e^{1/2}).
\ee
Combining \eqref{recon-2}, \eqref{phi1}, \eqref{recon-t2-2}, \eqref{recon-t3-diff}, and using that $\frec=\freco+\frect+\frecth$, we finish the proof of Theorem~\ref{main res}.

\section{A more detailed look at the function $\Psi$}\label{sec:PSI look}

\subsection{Properties of the function $\Psi$}\label{sec:PSI props}
Theorem~\ref{main res} shows that the function $\Psi$ defined in \eqref{psi simp} plays a key role in the description of the aliasing artifact. \bt{By \eqref{psi props}, the series that defines $\Psi$ converges absolutely at every point.} Here we prove some of the properties of $\Psi$. 

\begin{lemma}\label{lem:PSI props}
Under the assumptions~\ref{ass:w} one has 
\begin{enumerate}
\item \bt{$\Psi$ is continuous on $\br\times(\br\setminus 0)\times\br$;}
\item $\Psi(h;a,r+1)=\Psi(h;a,r)$ and $\bt{\Psi(h;-a,-r)=\Psi(h;a,r)}$ for all $h,a,r \in\br$;
\item $\Psi(h+a;a,r)=\Psi(h;a,r)$ for all $h,a,r \in\br$;
\end{enumerate}
\end{lemma}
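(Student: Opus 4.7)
\textbf{Proof plan for Lemma~\ref{lem:PSI props}.}

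The plan is to extract everything from the two size estimates on $\psi$ provided in \eqref{psi props}: (i) $\psi(\hat q)=0$ for $\hat q>c$, and (ii) $\psi^{(n)}(\hat q)=c_n(-\hat q)^{-(1/2)-n}+O(|\hat q|^{-(3/2)-n})$ as $\hat q\to -\infty$. In particular, the mean-value bound combined with (ii) gives, for some $C>0$ and all $|q|$ sufficiently large,
\be\label{mvt bnd plan}
|\psi(q+h)-\psi(q)|\le C|h|(|q|+1)^{-3/2}
\ee
whenever $|h|\le |q|/2$, say. This is the main analytic input.

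First I would prove (1). Fix a compact neighborhood $K$ of a point $(h_0,a_0,r_0)$ with $a_0\not=0$; shrink it so that $|a|\ge a_0/2>0$ and $|h|,|r|$ are bounded on $K$. For $k$ with $a(k-r)$ bounded away from $-\infty$, $+\infty$ lies only in a finite $K$-independent window, and in that window each term $\psi(a(k-r)+h)-\psi(a(k-r))$ is continuous in $(h,a,r)$ by the uniform continuity of $\psi$. For the remaining $k$ I split into two tails. On the tail where $a(k-r)$ and $a(k-r)+h$ both exceed $c$, each term vanishes (so only finitely many $k$, depending on $K$, need separate treatment). On the tail where $a(k-r)\to -\infty$, I apply \eqref{mvt bnd plan} to dominate the summand by $C|h|(|a(k-r)|+1)^{-3/2}$, which is summable uniformly over $K$. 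Hence the series converges uniformly on $K$ and its limit is continuous; since $(h_0,a_0,r_0)$ was arbitrary in $\br\times(\br\setminus 0)\times\br$, (1) follows.

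Properties (2) follow by reindexing: replacing $r$ by $r+1$ and shifting $k\mapsto k+1$ leaves the series invariant, giving periodicity in $r$; replacing $(a,r)$ by $(-a,-r)$ and shifting $k\mapsto -k$ sends $a(k-r)\mapsto -a(-k+r)=a(k-r)$, giving the reflection symmetry. For (3), I compute
\be\label{plan shift}
\Psi(h+a;a,r)=\sum_{k}[\psi(a(k+1-r)+h)-\psi(a(k-r))],
\ee
and the substitution $k\mapsto k-1$ converts the first summand into $\psi(a(k-r)+h)$, at the cost of replacing $\psi(a(k-r))$ by $\psi(a(k-1-r))$. Adding and subtracting $\psi(a(k-r))$ gives $\Psi(h;a,r)$ plus the telescoping series $\sum_k[\psi(a(k-r))-\psi(a(k-1-r))]$. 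This telescoping sum equals $\lim_{N\to\infty}[\psi(a(N-r))-\psi(a(-N-r))]$, which vanishes because both arguments tend to $\pm\infty$ and $\psi$ vanishes at $+\infty$ and decays at $-\infty$ by \eqref{psi props}.

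The main obstacle is the uniformity argument in (1): one must simultaneously control (a) the ``moving'' finite window of $k$'s where $\psi$ fails to be smooth and where the rough bound \eqref{mvt bnd plan} does not apply, and (b) the algebraic tail. Both are handled by choosing the neighborhood $K$ so that $|a|\ge a_0/2$, which keeps the spacing of the sample points $a(k-r)$ bounded below. This also clarifies why continuity must be stated on $\br\times(\br\setminus 0)\times\br$: as $a\to 0$ the sampling density blows up and the tail bound fails to be summable uniformly, matching the separate definition $\Psi(h;0,r):=0$ in \eqref{psi simp}. A minor technical point in step (3) is to rigorously interchange the reindexing with the passage to the infinite series; this is justified by the absolute convergence established in step (1).
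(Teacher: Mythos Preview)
Your proof is correct and follows essentially the same route as the paper: for (1) you split off a finite window of non-smooth terms and control the tail by the $|q|^{-3/2}$ decay of $\psi'$, and for (2) you use the same reindexings. For (3) the paper truncates to a finite sum, adds and subtracts $\psi(a(k'-r))$, and lets the truncation parameter go to infinity, whereas you split the infinite series directly into $\Psi(h;a,r)$ plus the telescoping piece; this is legitimate because the telescoping series $\sum_k[\psi(a(k-r))-\psi(a(k-1-r))]$ is itself absolutely convergent by the same $\psi'=O(|q|^{-3/2})$ estimate you used in (1) --- you should say this explicitly rather than pointing only to ``the absolute convergence established in step (1)'', since that step concerns a different series.
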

\begin{proof}
\bt{
When $a$ is bounded away from zero, the number of terms with limited smoothness in the sum in \eqref{psi simp} is uniformly bounded when $h$ and $r$ are confined to a bounded set. Hence we can represent $\Psi$ as a sum of finitely many continuous terms and an absolutely convergent series, whose terms are smooth functions. This proves statement (1).}

The first half of statement (2) is obvious. \bt{The second half of statement (2) follows immediately by replacing $a\to-a$, $r\to-r$ in \eqref{psi simp}, and changing the index of summation $k\to-k$.} 

To prove statement (3), fix some $c\gg 1$ and shift the index of summation $k’=k+1$ in \eqref{psi simp}:
\be\label{PSI per st1}
\Psi(h+a;a,r)=\sum_{k’\le c} \left[\psi\left(a(k’-r)+h\right)-\psi\left(a(k’-1-r)\right)\right].
\ee
At first glance, to finish the proof we can just change back $k=k’-1$ in the second $\psi$. This does not work, since each of the sums taken separately is divergent (cf. \eqref{psi props}). Hence we argue differently. We have for any $K\gg1$:
\be\label{PSI per st2}\begin{split}
\Psi(h+a;a,r)=&\sum_{k’=-K}^c \left[\psi(a(k’-r)+h)-\psi(a(k’-1-r))\right]+O(K^{-1/2})\\
=&\sum_{k’=-K}^c \left[\psi(a(k’-r)+h)-\psi(a(k’-r))\right]-\psi(a(K-1-r))\\
&+O(K^{-1/2})=\Psi(h;a,r)+O(K^{-1/2}),\ K\to\infty.
\end{split}
\ee
The desired assertion now follows.
\end{proof}

\color{black}
\begin{lemma}\label{lem:PSI decay}
\bt{Suppose $w$ is compactly supported and $w^{(N)}\in L^q(\br)$ for some $N\ge 1$ and $q>2$. One has:}
\be\label{PSI asymp}
\pa_h^{n_1}\pa_r^{n_2}\Psi(h;a,r)=O(|a|^{{N-(n_1+n_2)}}),\ a\to0,\ n_1,n_2\ge 0,n_1+n_2\le N-1,
\ee
uniformly in $h,r\in\br$. 
\end{lemma}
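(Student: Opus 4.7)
My plan is to apply Poisson summation to exploit the cancellation that makes the sum defining $\Psi$ small. The key observation is that the $1$-periodic-in-$r$ function $F(r) := \sum_{k\in\mathbb{Z}}\psi(a(k-r))$ satisfies $\Psi(h;a,r) = F(r - h/a) - F(r)$, reducing the task to Fourier analysis of $F$. Poisson summation gives $F(r) = (1/a)\sum_{m\in\mathbb{Z}} \hat\psi(m/a)\,e^{-2\pi i m r}$, and subtracting produces
\[
\Psi(h;a,r) = \frac{1}{a}\sum_{m\ne 0} \hat\psi(m/a)\bigl(e^{2\pi i m h/a}-1\bigr) e^{-2\pi i m r},
\]
the $m=0$ mode canceling—this cancellation is the source of the smallness in $a$. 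Derivatives $\pa_h^{n_1}\pa_r^{n_2}$ are taken termwise and introduce polynomial factors in $m$ and $1/a$.

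The main quantitative input is a pointwise decay estimate for $\hat\psi$. Substituting $\hat p = t^2$ in \eqref{psi simp} gives the convenient representation $\psi(\hat q) = \int_0^\infty w(\hat q + t^2)\,\dd t$; taking the Fourier transform and using the Fresnel integral $\int_{\br} e^{2\pi i \xi t^2}\,\dd t = c|\xi|^{-1/2} e^{\pm i \pi/4}$ yields
\[
\hat\psi(\xi) = c\,\hat w(\xi)\,|\xi|^{-1/2} e^{\pm i\pi/4}.
\]
Integrating by parts $N$ times (justified since $w$ is compactly supported and $w^{(N)}\in L^q\subset L^1$) gives $|\hat w(\xi)|\le C|\xi|^{-N}$, hence $|\hat\psi(\xi)|\le C|\xi|^{-N-1/2}$ at infinity.

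Plugging this into the Fourier series for $\pa_h^{n_1}\pa_r^{n_2}\Psi$ and summing in absolute value bounds the derivative by a constant times
\[
|a|^{-1-n_1}\sum_{m\ne 0}|m|^{n_1+n_2}(|m|/|a|)^{-N-1/2},
\]
and the hypothesis $n_1 + n_2 \le N-1$ makes the resulting series converge, producing the stated order of decay in $|a|$. Uniformity in $h,r$ is immediate since the bounds on individual Fourier coefficients do not depend on these variables.

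The main technical obstacle is the rigorous justification of Poisson summation for $\psi$, which by \eqref{psi props} is merely continuous with a square-root cusp inherited from the right edge of $\text{supp}(w)$. I would handle this by approximating $w$ by a smooth, compactly supported $w_\eta$, applying the classical Poisson summation to the resulting smoothed $\psi_\eta$, and passing to the limit $\eta\to 0$ using the uniform absolute convergence of both sides guaranteed by the estimates \eqref{psi props}.
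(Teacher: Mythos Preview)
Your Poisson-summation strategy is a genuine alternative to the paper's proof, which instead invokes the Euler--MacLaurin remainder estimate (stated there as a separate lemma): for $g(t):=\partial_h^{n_1}\partial_r^{n_2}\bigl[\psi(t-ar+h)-\psi(t-ar)\bigr]$ one checks $\int g=0$ and $g^{(N')}\in L^1$ with $N'=N-(n_1+n_2)$, and Euler--MacLaurin bounds $\bigl|a\sum_k g(ak)\bigr|$ by $c\,|a|^{N'}\Vert g^{(N')}\Vert_{L^1}$. The paper never computes $\hat\psi$; your Fresnel-integral identity $\hat\psi(\xi)=c\,\hat w(\xi)\,|\xi|^{-1/2}$ is a nice structural observation and yields an explicit Fourier series for $\Psi$, not merely a bound. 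The Euler--MacLaurin route, on the other hand, is more self-contained (no distributional Fourier transforms) and uses only $L^1$ norms of derivatives of $g$.

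There is, however, a real gap in your presentation. The function $F(r):=\sum_k\psi(a(k-r))$ does not exist: by \eqref{psi props} the summand behaves like $c_0|ak|^{-1/2}$ as $k\to-\infty$, so the series diverges (equivalently $\hat\psi(0)=\int\psi=\infty$, so even the distributional periodization of $\psi$ has an infinite zero mode). Your identity $\Psi=F(r-h/a)-F(r)$ thus subtracts two undefined quantities. The repair is to apply Poisson summation directly to the difference $g(t)=\psi(a(t-r)+h)-\psi(a(t-r))$, which \emph{is} in $L^1$ with $|g(t)|=O((1+|t|)^{-3/2})$ and has $\hat g(0)=0$; this yields your displayed Fourier series for $\Psi$ without ever invoking $F$. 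A smaller arithmetic point: summing your termwise estimate actually gives $O(|a|^{N-1/2-n_1})$, which for $n_2=0$ is half a power short of the stated $O(|a|^{N-n_1-n_2})$, so the claim that it ``produces the stated order of decay'' is not quite accurate in that case (though for $n_2\ge 1$ your bound is in fact stronger).
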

\begin{proof} We need the following simple lemma, which follows immediately from the Euler-MacLaurin summation formula \cite[eq. (25.7)]{Kac2002}. 
For convenience of the reader, the lemma is proven in appendix~\ref{sec:aux conv}. 


\begin{lemma}\label{lem:aux conv}
\bt{Pick some $N’\ge 1$.} Suppose $g,g^{(N’)}\in L^1(\br)$, $g^{(n)}(t)\to0$ as $t\to\infty$ for any $n=0,1,2,\dots,N’-1$, and $\int_{\br} g(x)dx=0$. Then, 
\be\label{riem sum lim}
\bigl|\e\sum_{k\in\mathbb Z} g(\e k)\bigr|\le c\,\e^{N’}\Vert g^{(N’)}\Vert_{L^1(\br)}
\ee
for some $c$ independent of $g$ and $\e$.
\end{lemma}


Set 
\be\label{del psi}
g(t):=\pa_h^{n_1}\pa_r^{n_2}(\psi(t-ar+h)-\psi(t-ar)). 
\ee
The dependence of $g$ on $h$ and $r$ is omitted for simplicity. As is easily seen, $g$ satisfies the assumptions of Lemma~\ref{lem:aux conv}. Indeed, due to Lemma~\ref{lem:PSI props}(2,3), we can assume $h\in[0,a)$, $r\in [0,1)$. 
The assumption $w^{(N)}\in L^q(\br)$, $q>2$, and \eqref{psi simp} imply that all the derivatives of $\psi$ up to the order $N$ are continuous on $\br$. 

From \eqref{psi props}, $|g^{(m)}(t)|\le c_m(1+|t|)^{-3/2}$, $0\le m\le N-(n_1+n_2)$, for some $c_m$ independent of $h$ and $r$. Hence $g$ decays sufficiently fast at infinity. 

It remains to check that $g$ integrates to zero. If $n_1>0$ or $n_2>0$, this is obvious. Suppose $n_1=n_2=0$. For some $c>0$,
\be\begin{split}
\int_{\br} g(t)\dd t
&=\int_{-A}^c g(t)\dd t+O(A^{-1/2})\\
&=-\int_{-A}^{h-A} \psi(t)\dd t+O(A^{-1/2})=O(A^{-1/2}),\ A\to\infty.
\end{split}
\ee
Application of Lemma~\ref{lem:aux conv} to $g$ in \eqref{del psi} with $\e=a$ and $N_1=N-(n_1+n_2)$ proves the desired assertion. The uniformity with respect to $h$ and $r$ is obvious. 
\end{proof}

%

\begin{corollary} Suppose $w$ is compactly supported, and $w^{(N)}\in L^q(\br)$ for some $N\ge 1$ and $q>2$. Then the derivatives $\pa_h^{n_1}\pa_r^{n_2}\Psi(h;a,r)$, $n_1,n_2\ge 0$, $n_1+n_2\le N-1$, are continuous for all values of their arguments.
\end{corollary}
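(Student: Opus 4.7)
The plan is to prove the corollary by separately handling continuity for $a\ne 0$ and continuity at $a=0$.

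\textbf{Step 1: termwise differentiation and continuity for $a\ne 0$.} As noted in the proof of Lemma~\ref{lem:PSI decay}, the hypothesis $w^{(N)}\in L^q(\br)$, $q>2$, together with \eqref{psi simp} implies that $\psi,\psi',\dots,\psi^{(N)}$ are all continuous on $\br$, and from \eqref{psi props} each $\psi^{(n)}$ satisfies $\psi^{(n)}(\hat q)=c_n(-\hat q)^{-(1/2)-n}+O(|\hat q|^{-(3/2)-n})$ as $\hat q\to-\infty$ and vanishes for $\hat q\gg 1$. Differentiating the general term in \eqref{psi simp} using the chain rule gives, for $n_1\ge 1$,
\be
\pa_h^{n_1}\pa_r^{n_2}\bigl[\psi(a(k-r)+h)-\psi(a(k-r))\bigr]=(-a)^{n_2}\psi^{(n_1+n_2)}(a(k-r)+h),
\ee
and, for $n_1=0$, $n_2\ge 1$,
\be
\pa_r^{n_2}\bigl[\psi(a(k-r)+h)-\psi(a(k-r))\bigr]=(-a)^{n_2}\bigl[\psi^{(n_2)}(a(k-r)+h)-\psi^{(n_2)}(a(k-r))\bigr].
\ee
On any compact set $K$ in $\br\times(\br\setminus\{0\})\times\br$, so that $|a|\ge a_0>0$, I would use the decay from \eqref{psi props} to bound the $n_1\ge 1$ terms by $C|k|^{-(1/2)-n_1-n_2}$ and — by inserting $\psi^{(n_2+1)}$ via the mean value theorem (which is available since $n_1+n_2\le N-1$ implies $n_2+1\le N$) — the $n_1=0$ terms by $C|k|^{-(3/2)-n_2}$. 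Both bounds are summable, so the Weierstrass M-test delivers uniform convergence of the differentiated series on $K$. Combined with the continuity of $\psi^{(n)}$ for $n\le N$, this proves that $\pa_h^{n_1}\pa_r^{n_2}\Psi$ exists and is continuous on $\br\times(\br\setminus\{0\})\times\br$.

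\textbf{Step 2: continuity at $a=0$.} By the definition $\Psi(h;0,r)\equiv 0$, all $h,r$-derivatives at $a=0$ vanish. By Lemma~\ref{lem:PSI decay}, for $n_1+n_2\le N-1$,
\be
\bigl|\pa_h^{n_1}\pa_r^{n_2}\Psi(h;a,r)\bigr|=O(|a|^{N-(n_1+n_2)})=O(|a|),\quad a\to 0,
\ee
uniformly in $h,r$. Hence the derivative tends to $0$ as $a\to 0$, matching its value at $a=0$, and continuity follows.

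\textbf{Main obstacle.} The principal technical difficulty is the $n_1=0$, $n_2\ge 1$ case in Step~1, where one must estimate differences of derivatives of $\psi$ rather than the derivatives themselves, and confirm summability uniformly on compact sets away from $a=0$. This is handled cleanly by invoking the asymptotic \eqref{psi props} for the next derivative $\psi^{(n_2+1)}$, which is legitimate precisely because the corollary is restricted to orders $n_1+n_2\le N-1$.
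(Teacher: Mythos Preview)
Your proposal is correct and follows essentially the same two-case split as the paper: away from $a=0$ you argue via uniform convergence of the termwise-differentiated series (the paper simply says ``proven the same way as assertion (1) of Lemma~\ref{lem:PSI props}''), and at $a=0$ you invoke Lemma~\ref{lem:PSI decay}, exactly as the paper does. The only minor presentational difference is that in Step~1 you use the Weierstrass $M$-test with explicit decay bounds from \eqref{psi props}, whereas the paper's template argument from Lemma~\ref{lem:PSI props}(1) phrases it as separating off finitely many terms and noting the tail is an absolutely convergent series of smooth functions; these are equivalent formulations of the same idea.
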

\begin{proof} The continuity away from $a=0$ is proven the same way as assertion (1) of Lemma~\ref{lem:PSI props}. The continuity at $a=0$ follows from Lemma~\ref{lem:PSI decay}. 
\end{proof}

\color{black}
\subsection{Computing $\Psi$ numerically}\label{sec:PSI numer}
Numerically, we compute $\Psi$ using the following approach. \bt{Due to Lemma~\ref{lem:PSI props}(2,3), we assume $h\in[0,a)$, $r\in [0,1)$.} The mollifier in our experiments is given by
\be\label{apert fn}
w(t)=(15/16)(1-t^2)_+^2.
\ee
First, $\psi(t)$ is computed by analytically evaluating the integral in \eqref{psi simp}. Then we compute $\Delta\psi(t,h):=\psi(t+h)-\psi(t)$. For moderate values of $t$ we compute $\Delta\psi$ directly from the definition. For $t\ll -1$ we use 
\be\label{del psi asymp}
\Delta\psi(t,h)\approx h / (4|t|^{3/2}). 
\ee
Finally, we write
\be\label{PSI numer}\begin{split}
\Psi(h;a,r)\approx &\sum_{k=-K+1}^c \Delta\psi(a(k-r),h)+\frac{h}{4|a|^{3/2}}\sum_{k=K}^{\infty}k^{-3/2},
\end{split}
\ee
where $c>0$ is selected so that $\Delta\psi(a(k-r),h)=0$ for all $k>c$ and $h\in[0,a)$, and $K\gg 1$. The last sum is estimated using the asymptotic formula for the Hurwitz Zeta Function \cite[Equation (1.1)]{Nemes2017}
\be\label{Hurw}
\zeta(s,t):=\sum_{k=0}^\infty (k+t)^{-s}=\frac{t^{1-s}}{s-1}+\frac{t^{-s}}2+O(t^{-(s+1)}),\ t\to+\infty,
\ee
where $s=3/2$ and $t=K$. The plots of $\Psi(ah’;a,r)$, $0\le h’\le1$, for the values $a=1,2,4$ and $r=1/3$ are shown in Figure~\ref{fig:PSI}. 

\begin{figure}[h]
{\centerline{
{\epsfig{file={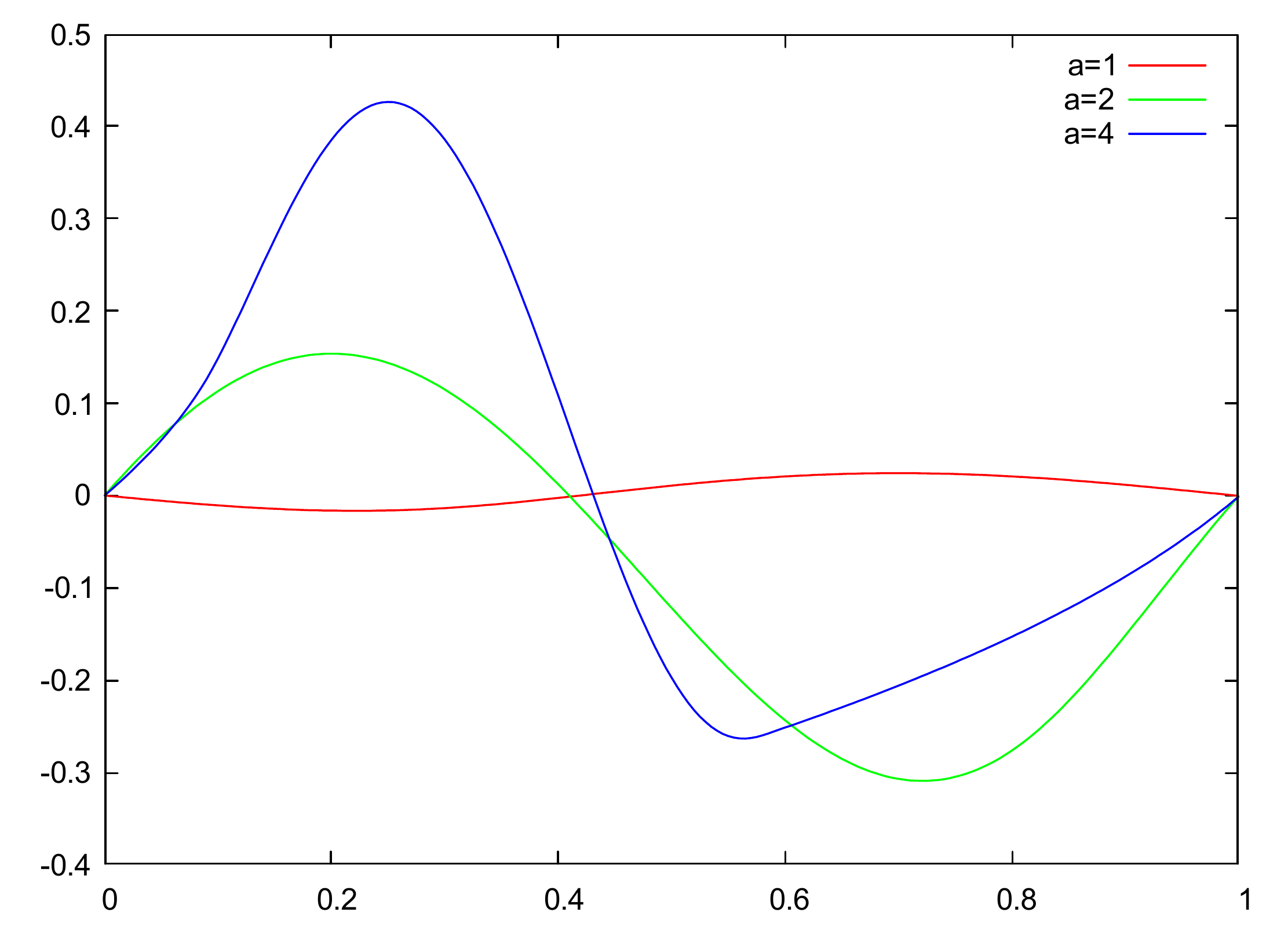}, width=7cm}}
}}
\caption{Plots of $\Psi(ah’;a,r)$ for three values of $a$. The variable $h’$ is on the horizontal axis.}
\label{fig:PSI}
\end{figure}

In agreement with Lemma~\ref{lem:PSI decay}, we see that $\Psi(ah’;a,r)$ decays rapidly as $a\to0$.

\section{Numerical experiments}\label{sec:numerics}

\subsection{Classical Radon transform}

In this subsection we experiment with the classical Radon transform (CRT), which integrates over lines:
\be\label{CRT example}
\hat f(\al,p)=\int_{S_{\al,p}}f(x)\dd x,\ \vec\al=(\cos\al,\sin\al),\ S_{\al,p}:=\{x\in\br^2:\al\cdot x=p\}.
\ee
Reconstruction uses \eqref{recon-orig}:
\be\label{CRT data}\begin{split}
\frec(x)=&-\frac{\Delta\al}{2\pi}\sum_{|\al_k|\le \pi/2} \frac{1}\pi \int \frac{\pa_p\hat f_\e(\al_k,p)}{p-\mps(x,\al_k)}\dd p,\ \mps(x,\al)\equiv \vec\al\cdot x,\\
\hat f_\e(\al_k,p)=&\int w_\e(p-\rho)\hat f(\al_k,\rho)\dd\rho,\ \al_k=-(\pi/2)+(\pi/N_{\al})(k+\de),
\end{split}
\ee
and $w$ is the same as in \eqref{apert fn}. The weights in both the Radon transform and the inversion formula are set to 1: $W(\al,p;x)\equiv1$, $\omega(\al,x)\equiv1$.

The function $f$ is the characteristic function of the disk centered at the origin with radius $r$. Thus, $\s=\{x\in\br^2:|x|=r\}$. \bt{By \eqref{Phi eqs}, 
\be
|\dd H(y_0)|=|\dd_x\mps(\alst,y_0)|=|\alst|=1.
\ee
Therefore,} by \eqref{p der cond}, 
\be\label{crt aux}
M=-(\vec\Theta_0^\perp\cdot\pa_y)^2 H(y)|_{y=y_0}=1/r>0
\ee
is the curvature of $\s$ at $y_0$. Also, $\vec\Theta_0=\dd H(y_0)$ points towards the center of curvature of $\s$ at $y_0$ \bt{(the center of the disk)}.

At a given $x\not\in\s$, aliasing arises due to the parts of $\s$ where the lines $\s_{\al,p}\ni x$ are tangent to $\s$. For $|x|>r$, two such lines exist. We pick $x_0=(r,b)$ and find two pairs $(\alst ,\pst)$ with the required properties. Clearly, one of the pairs is $(\alst =\pi,\pst=-r)$, and the other - $(\alst =2\tan^{-1}(b/r)-\pi,\pst=-r)$. \bt{This choice of values of $(\alst ,\pst)$ ensures that $\s_{\alst,\pst+\de}$, where $0<\de\ll1$, intersects $\s$ at two points (cf. the paragraph following \eqref{p der cond}). See Figure~\ref{fig:CRT_diagr}, where the first pair (with $\alst=\pi$) is shown in red, and the second - in blue. Contributions coming from a neighborhood of each point of tangency $y_0$ are computed by \eqref{recon-2} using the corresponding values of parameters (computed elsewhere in this subsection) and added.} For reconstructions we use $r=5$ and $x_0=(5,7)$. To better illustrate the aliasing artifact we also reconstruct a small region of interest (ROI), which is a square centered at $x_0$ with side length $40\e$.

\begin{figure}[h]
{\centerline{
{\epsfig{file=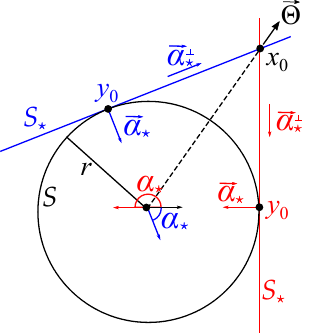}, height=4.5cm}}
}}
\caption{\bt{Illustration of various quantities used in the main formula \eqref{recon-2} to predict aliasing from a disk in the case of the classical Radon transform.}}
\label{fig:CRT_diagr}
\end{figure}

For computations we also need $u_0$ and $\mu_0$ (cf. \eqref{recon-2}). They follow easily from \eqref{two derivs 1}:
\be\label{u0 mu0}
u_0=\vec\al_\star,\ \mu_0=\vec\al_\star^\perp\cdot(x_0-y_0),
\ee
where $y_0$ is the point where $\sst$ is tangent to $\s$. \bt{As is seen from Figure~\ref{fig:CRT_diagr}, $\mu_0=-|x_0-y_0|$ for the first (red) pair $(\alst,\pst)$, and $\mu_0=|x_0-y_0|$ for the second (blue) pair.}

\begin{figure}[h]
{\centerline{
{\hbox{
{\epsfig{file={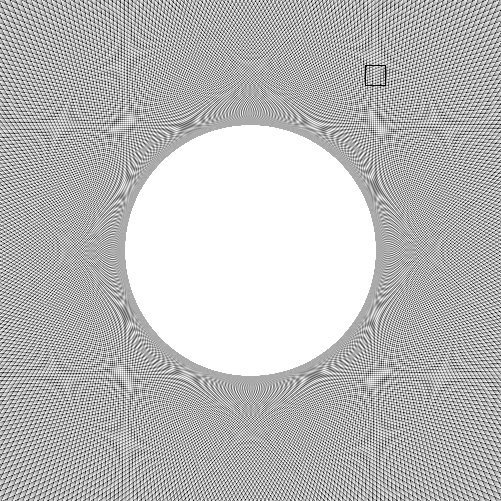}, height=4.5cm}}
{\epsfig{file={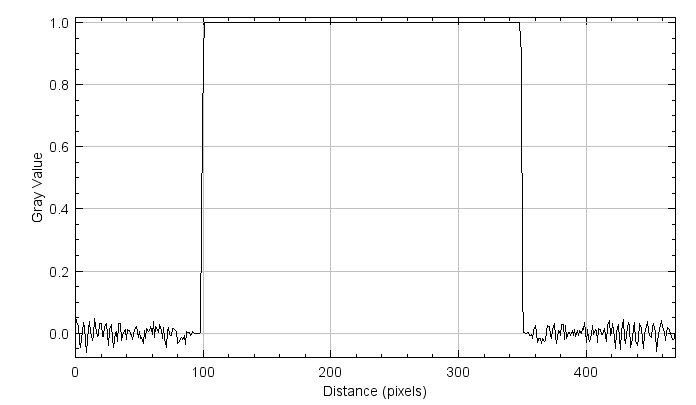}, height=4.5cm}}
}}}}
\caption{CRT reconstruction of the region $|x_1|,|x_2|\le 10$: $\e=0.02$, $N_{\al}=200$, $\de=0.03$. Left: global reconstruction, right: profile of the reconstruction through the center.}
\label{fig:global_CRT_N=200}
\end{figure}

\begin{figure}[h]
{\centerline{
{\hbox{
{\epsfig{file={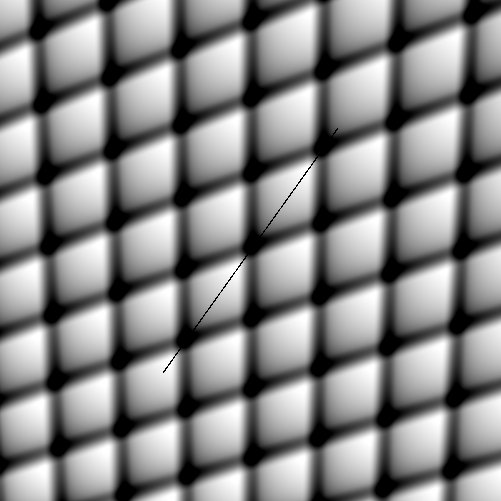}, height=4.5cm}}
{\epsfig{file={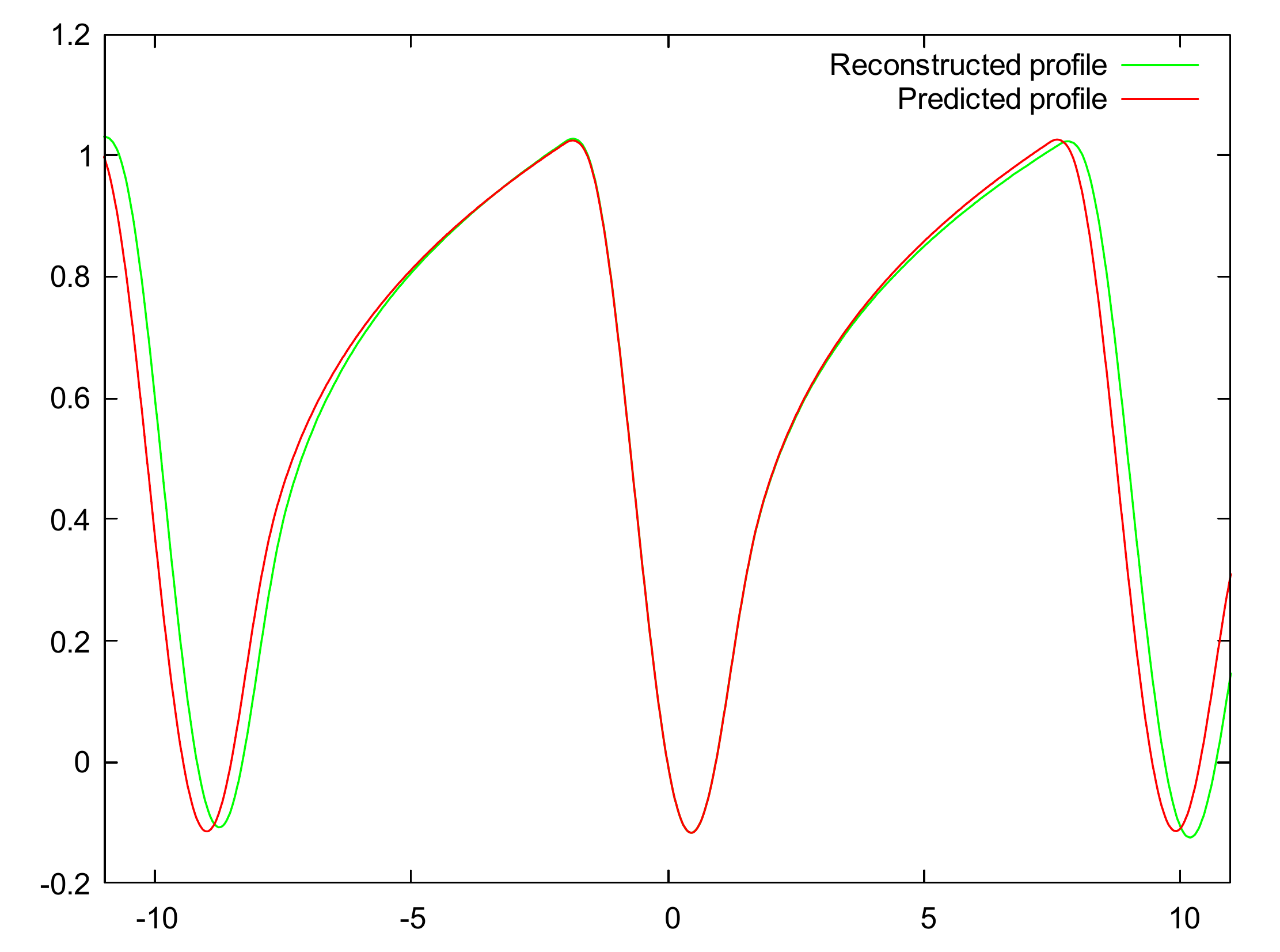}, height=4.5cm}}
}}}}
\caption{ROI CRT reconstruction: $\e=0.02$, $N_{\al}=200$, $\de=0.03$. The ROI is the square shown in Figure~\ref{fig:global_CRT_N=200}. Left: reconstructed ROI, right: reconstructed (green) and predicted (red) profiles along the line segment $x=x_0+\e h\vec\Theta$, $|h|\le 11$, shown on the left. The variable $h$ is on the horizontal axis.}
\label{fig:local_CRT_N=200_shift=0.03}
\end{figure}

\begin{figure}[h]
{\centerline{
{\hbox{
{\epsfig{file={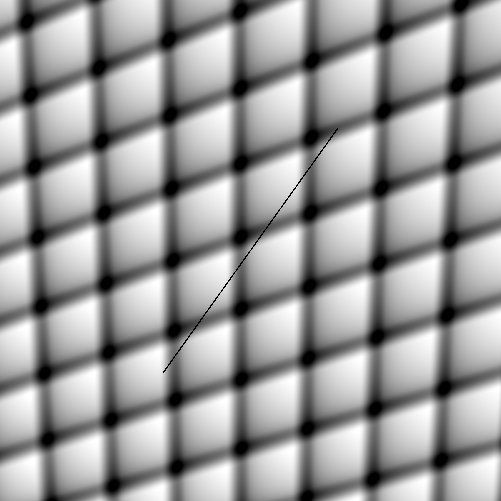}, height=4.5cm}}
{\epsfig{file={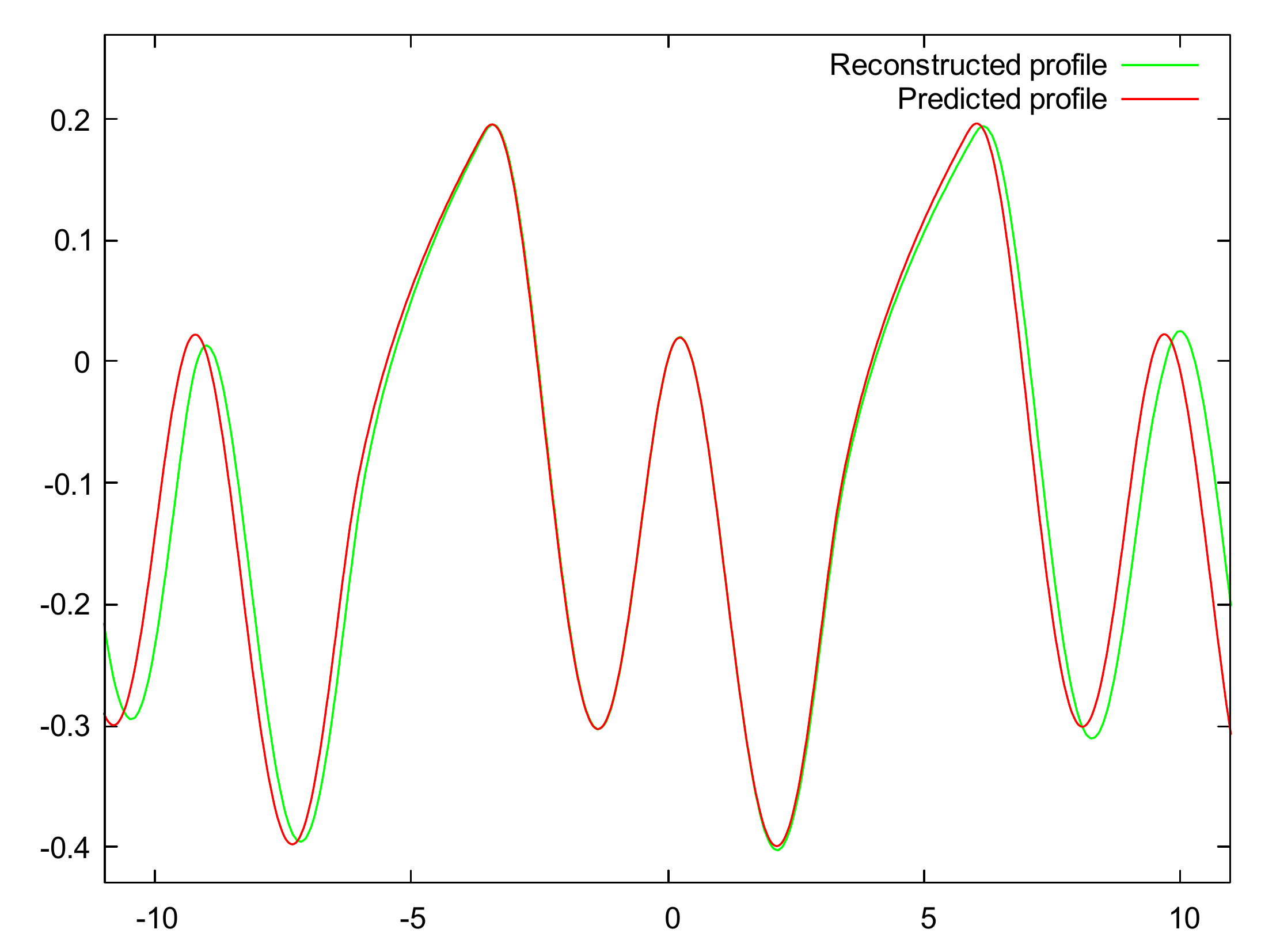}, height=4.5cm}}
}}}}
\caption{ROI CRT reconstruction: $\e=0.02$, $N_{\al}=200$, $\de=0.2$. The ROI is the square shown on the left in Figure~\ref{fig:global_CRT_N=200}. Left: reconstructed ROI, right: reconstructed (green) and predicted (red) profiles along the line segment $x=x_0+\e h\vec\Theta$, $|h|\le 11$, shown on the left. The variable $h$ is on the horizontal axis.}
\label{fig:local_CRT_N=200_shift=0.2}
\end{figure}

\begin{figure}[h]
{\centerline{
{\hbox{
{\epsfig{file={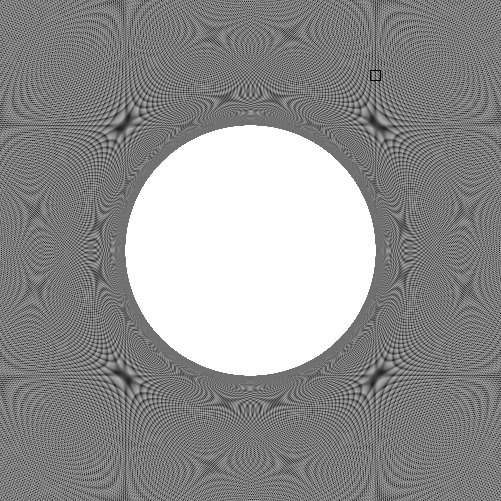}, height=4.5cm}}
}}}}
\caption{CRT reconstruction of the region $|x_1|,|x_2|\le 10$: $\e=0.01$, $N_{\al}=400$, $\de=0.03$.}
\label{fig:global_CRT_N=400}
\end{figure}

\begin{figure}[h]
{\centerline{
{\hbox{
{\epsfig{file={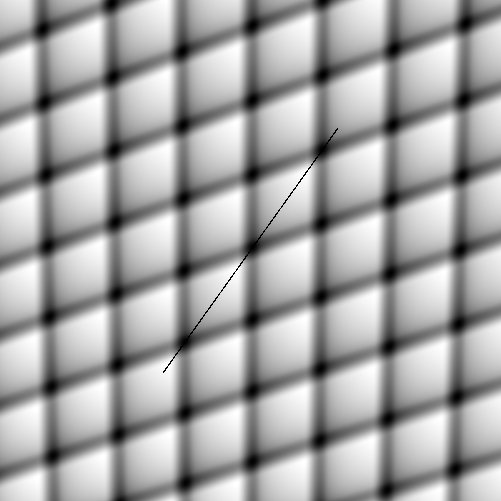}, height=4.5cm}}
{\epsfig{file={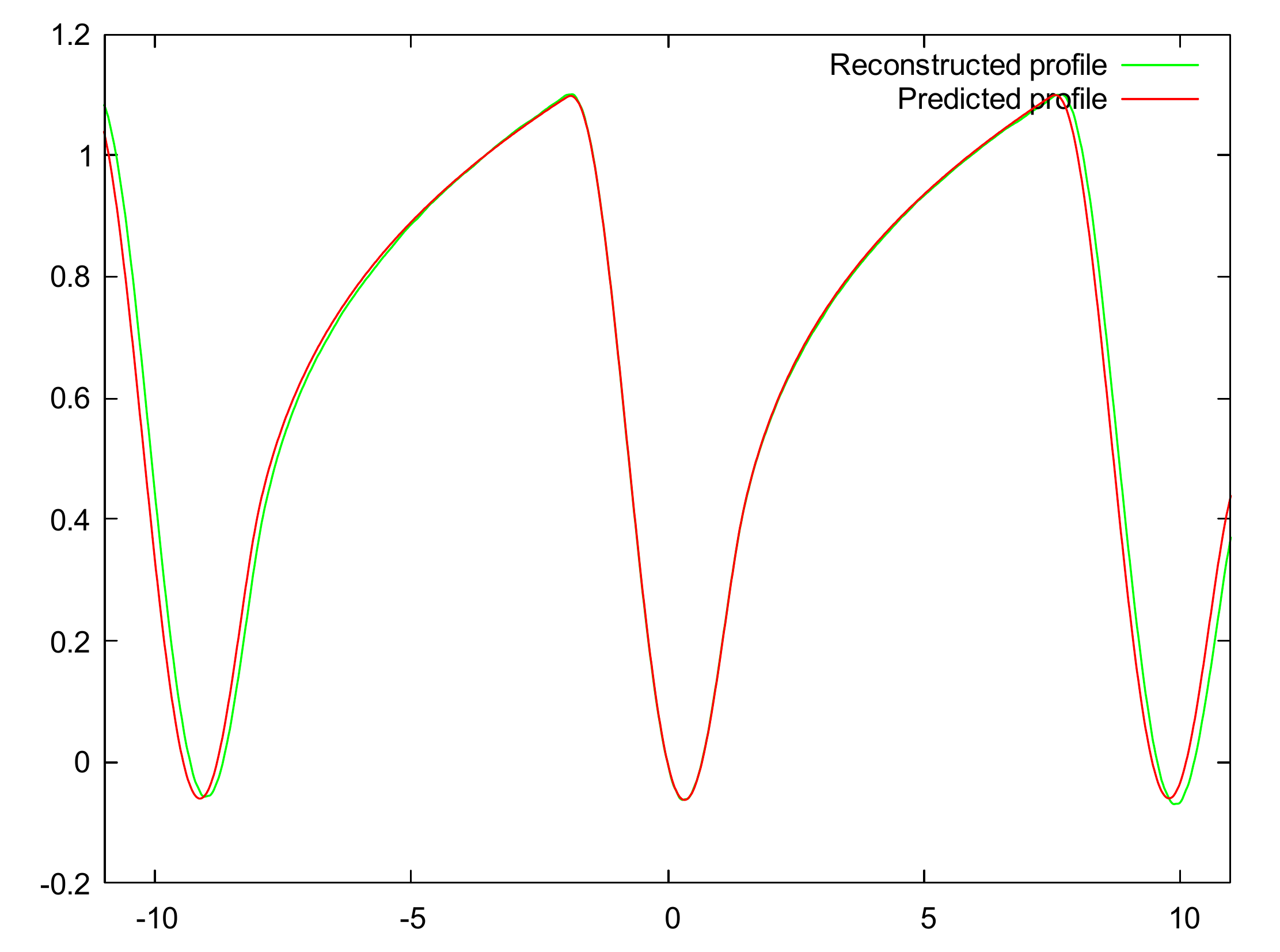}, height=4.5cm}}
}}}}
\caption{ROI CRT reconstruction: $\e=0.01$, $N_{\al}=400$, $\de=0.03$. The ROI is the square shown on the left in Figure~\ref{fig:global_CRT_N=400}. Left: reconstructed ROI, right: reconstructed (green) and predicted (red) profiles along the line segment $x=x_0+\e h\vec\Theta$, $|h|\le 11$, shown on the left. The variable $h$ is on the horizontal axis.}
\label{fig:local_CRT_N=400_shift=0.03}
\end{figure}

\begin{figure}[h]
{\centerline{
{\hbox{
{\epsfig{file={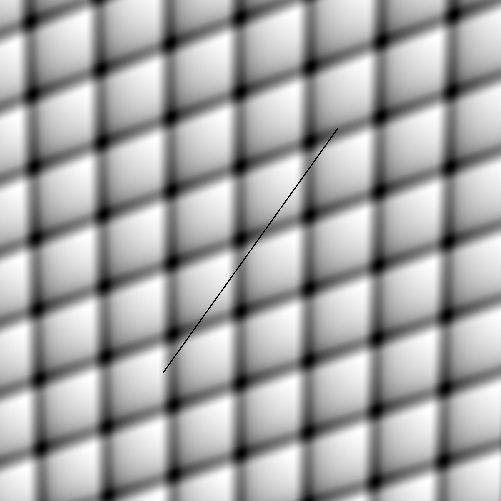}, height=4.5cm}}
{\epsfig{file={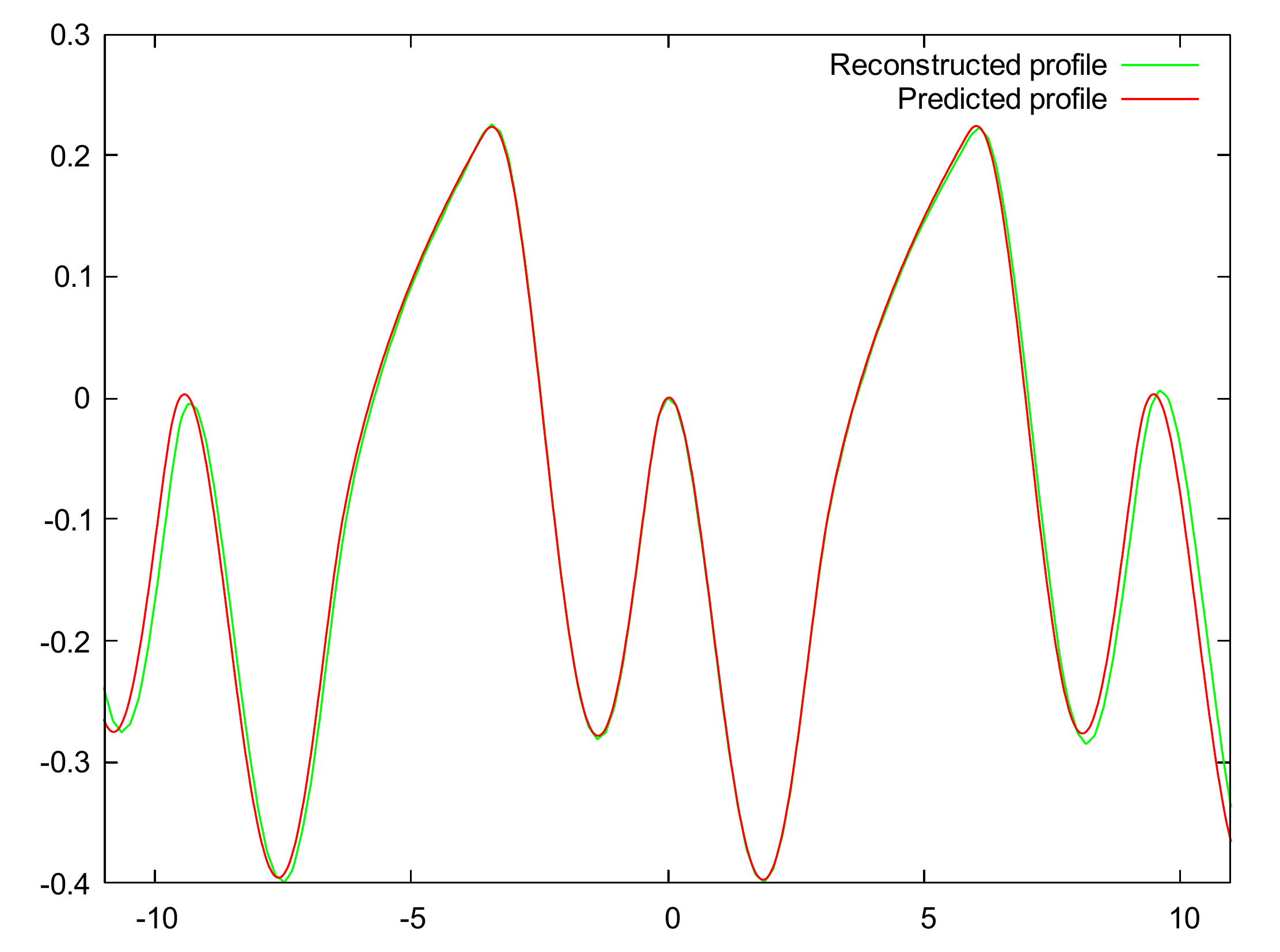}, height=4.5cm}}
}}}}
\caption{ROI CRT reconstruction: $\e=0.01$, $N_{\al}=400$, $\de=0.2$. The ROI is the square shown on the left in Figure~\ref{fig:global_CRT_N=400}. Left: reconstructed ROI, right: reconstructed (green) and predicted (red) profiles along the line segment $x=x_0+\e h\vec\Theta$, $|h|\le 11$, shown on the left. The variable $h$ is on the horizontal axis.}
\label{fig:local_CRT_N=400_shift=0.2}
\end{figure}

In the first experiment, $\e=0.02$, $N_\al=200$, and in the second: $\e=0.01$, $N_\al=400$. Since the direction $\alst =0$ is special, we use a non-zero shift $\de$ in \eqref{CRT data} for additional generality. The results are shown in Figures~\ref{fig:global_CRT_N=200} -- \ref{fig:local_CRT_N=400_shift=0.2}. 

Figure~\ref{fig:global_CRT_N=200} (left panel) shows the reconstructed region $|x_1|,|x_2|\le 10$ with $\e=0.02$ and $N_\al=200$. The left panel also shows the ROI (a small square). The right panel shows a line profile through the origin to confirm the accuracy of reconstruction. 
Figure~\ref{fig:local_CRT_N=200_shift=0.03} shows the reconstructed ROI with $\de=0.03$. The right panel shows the profiles of the reconstructed difference $\e^{-1/2}(\frec(x)-\frec(x_0))$ (green) and the prediction given by the main term on the right in \eqref{recon-2} (red) along the line segment $x=x_0+\e h\vec\Theta$, $|h|\le 11$, where $\vec\Theta=x_0/|x_0|$. The line segment is indicated on the left panel. The values of $h$ are on the horizontal axis of the profile. \bt{From \eqref{u0 mu0}, the values of $u_0\cdot\check x$ used in \eqref{recon-2} are given by $h\,\vec\alst\cdot\Theta$.}

Similarly, Figure~\ref{fig:local_CRT_N=200_shift=0.2} shows the reconstructed ROI and line profiles for the same line segment when $\de=0.2$. 

Figure~\ref{fig:global_CRT_N=400} shows the reconstructed region $|x_1|,|x_2|\le 10$ with $\e=0.01$ and $N_\al=400$. The ROI is indicated on the left panel. Recall that the size of the ROI is proportional to $\e$. Figure~\ref{fig:local_CRT_N=400_shift=0.03} shows the ROI and the corresponding line profiles for $\de=0.03$. Similarly, Figure~\ref{fig:local_CRT_N=400_shift=0.2} shows the reconstructed ROI and line profiles when $\de=0.2$. In both cases, the vector $\vec\Theta$ and the range of $h$ that determine the line segment are the same as before.

Comparing Figure~\ref{fig:local_CRT_N=200_shift=0.03} with Figure~\ref{fig:local_CRT_N=400_shift=0.03} and Figure~\ref{fig:local_CRT_N=200_shift=0.2} with Figure~\ref{fig:local_CRT_N=400_shift=0.2}, we see that reducing $\e$ and $\Delta \al$ improves the match between the reconstruction and prediction.

\subsection{Circular Radon transform}\label{sec:grt_exp}

In this subsection we experiment with the generalized Radon transform (GRT), which integrates over circles with any radius $\rho>0$ and centers on the circle $|x|=R$:
\be\label{GRT example}\begin{split}
(\R f)(\al,\rho)=&\hat f(\al,\rho)=\int_{S_{\al,\rho}}f(x)\dd x,\ \vec\al=(\cos\al,\sin\al),\\ 
\mps(\al,x):=&|x-R\vec\al|,\ S_{\al,\rho}:=\{x\in\br^2:|x-R\vec\al|=\rho\}.
\end{split}
\ee
The value of $R$ is fixed. Therefore
\be\label{grt aux}\begin{split}
&\dd_x\mps(\al,x)=\frac{x-R\vec\al}{|x-R\vec\al|},\ 
M=(1/\rhost)-(\vec\Theta_0^\perp\cdot\pa_y)^2 H(y)|_{y=y_0}>0.
\end{split}
\ee
\bt{In the computation of $M$ we used that $\dd_x\mps(\al,x)=1$.}
Reconstruction is achieved using a straightforward modification of \eqref{recon-orig} 
\be\label{recon-orig-GRT}\begin{split}
\frec(x)=&-\frac{\Delta\al}{2\pi}\sum_{\al_k\in\Omega} \frac{1}\pi \int \frac{\pa_\rho\hat f_\e(\al_k,\rho)}{p-\mps(x,\al_k)}\dd \rho,\\
\hat f(\al_k,\rho)=&\int w_\e(\rho-\rho’)\hat f(\al_k,\rho’)\dd\rho’,\ \al_k=(2\pi/N_{\al})k,\ w(\rho)=(15/16)(1-\rho^2)_+^2,
\end{split}
\ee
i.e. $w$ is the same as in \eqref{apert fn}. Clearly, the reconstruction is not theoretically exact anymore. But it preserves the strength of the singularities (in the Sobolev scale). Again, the weights in both the Radon transform and the inversion formula are set to 1: $W(\al,\rho;x)\equiv1$, $\omega(\al,x)\equiv1$.

The function $f$ is the characteristic function of the disk centered at $x_c$ with radius $r$. Thus, $\s=\{x\in\br^2:|x-x_c|=r\}$, \bt{see Figure~\ref{fig:GRT_diagr}}.

At a given $x_0$, aliasing arises due to the parts of $\s$ where various $\s_{\al,\rho}\ni x_0$ are tangent to $\s$. All such $(\al,\rho)$ are found by solving each of the two equations 
\be\label{eq rho v al}
|x_c-R\vec\al|-|x_0-R\vec\al|=\pm r
\ee
for $\al$ and setting $\rho=|x_0-R\vec\al|$. Generally, up to four solutions $(\al,\rho)$ (i.e., up to four circles $\s_{\al,\rho}$) can exist. To simplify the experiment, we reverse the argument. We pick some pair $(\alst ,\rhost)$ such that $\sst$ is tangent to $\s$ at some $y_0$, and then select some $x_0\in\sst$. To be specific, we select a ‘$+$’ in \eqref{eq rho v al}, i.e. $\rhost$ satisfies $|x_c-R\vec\alst|=r+\rhost$. This implies that $M=(1/r)+(1/\rhost)$, and $\vec\Theta_0=(y_0-R\vec\alst)/|y_0-R\vec\alst|$ points towards the center of curvature of $\s$ at $y_0$ \bt{(see Figure~\ref{fig:GRT_diagr}). Similarly to the classical Radon transform, our construction ensures that $\s_{\alst,\rhost+\de}$, where $0<\de\ll1$, intersects $\s$ at two points.}

\begin{figure}[h]
{\centerline{
{\epsfig{file={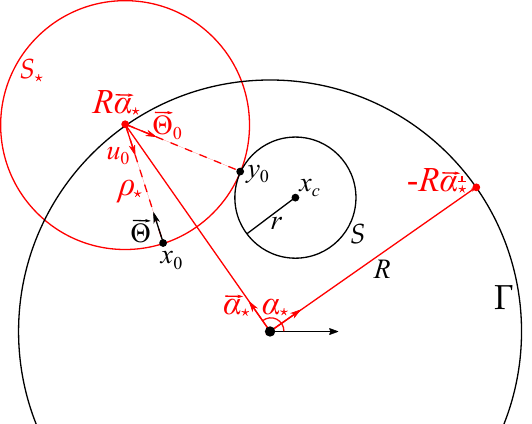}, width=8cm}}
}}
\caption{\bt{Illustration of various quantities used in the main formula \eqref{recon-2} to predict aliasing from a disk in the case of the circular Radon transform.}}
\label{fig:GRT_diagr}
\end{figure}

To illustrate aliasing only from the place where $\sst$ is tangent to $\s$ we select $\Omega$ to be a sufficiently small neighborhood of $\alst $.  
\bt{Since $\mps(\al,x)=|x-R\vec\al|$ and $\mP(\al)=|x_c-R\vec\al|-r$,} we find
\be\label{u0mu0 grt}\begin{split}
u_0=&\frac{x_0-R\vec\al_\star}{|x_0-R\vec\al_\star|},\\
\mu_0=&
-R\vec\al_\star^\perp\cdot(u_0-\vec\Theta_0)=-(R/\rhost)\vec\al_\star\cdot(x_0-y_0),
\end{split}
\ee
\bt{see Figure~\ref{fig:GRT_diagr}.}

For reconstructions we use 
\be\label{GRT param values}\begin{split}
&R=5,\ x_c=(1,1),\ r=2,\ (\alst ,\rhost)=(0.53\pi,2.24),\ 
x_0=(-1.42,2.95),\\ &\Omega:=[\alst -\pi/4,\alst +\pi/4].
\end{split}
\ee
In the first reconstruction, $\e=10^{-2}$, $N_\al=500$, and in the second: $\e=0.5\cdot 10^{-2}$, $N_\al=1000$. The results are shown in Figures~\ref{fig:GRT_N=500} and \ref{fig:GRT_N=1000}, respectively. The left panels show the limited angle reconstruction of the region $|x_1|,|x_2|\le 4$. The middle panels show the limited angle reconstruction of an ROI. The ROI is a small square centered at $x_0$ with side length $40\e$, the ROI is shown on the left panel. The right panels show the profiles of the reconstructed difference $\e^{-1/2}(\frec(x)-\frec(x_0))$ (green) and the prediction given by the main term on the right in \eqref{recon-2} (red) along the line segment $x=x_0+\e h\vec\Theta$, $|h|\le 6$, shown in the middle panel. The values of $h$ are on the horizontal axis of the profiles. The unit vector $\vec\Theta$ is chosen to be orthogonal to $\sst$ at $x_0$ (i.e., $\vec\Theta$ and $u_0$ are parallel, \bt{see Figure~\ref{fig:GRT_diagr}}). In the experiments we set $\vec\Theta=-u_0$. As is seen, reducing $\e$ and $\Delta \al$ improves the match between the reconstruction and prediction.

\begin{figure}[h]
{\centerline{
{\hbox{
{\epsfig{file={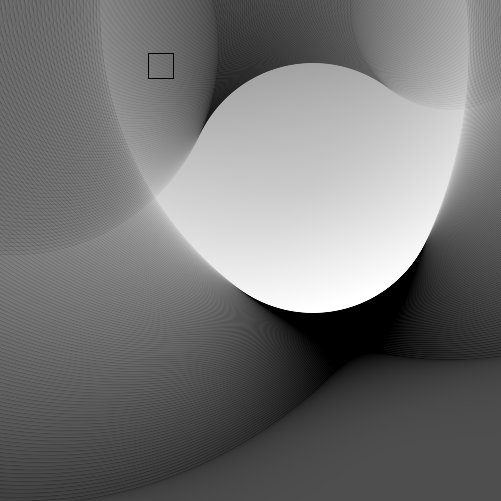}, height=3.5cm}}
{\epsfig{file={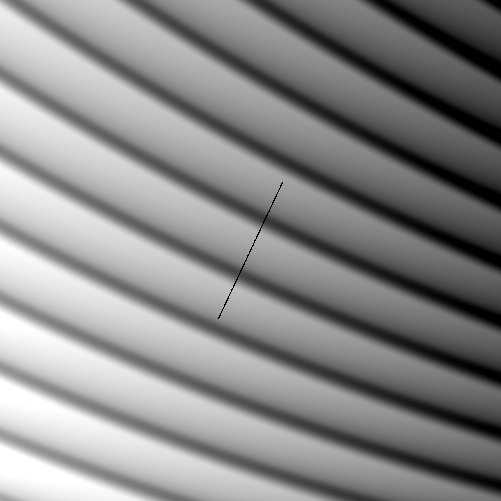}, height=3.5cm}}
{\epsfig{file={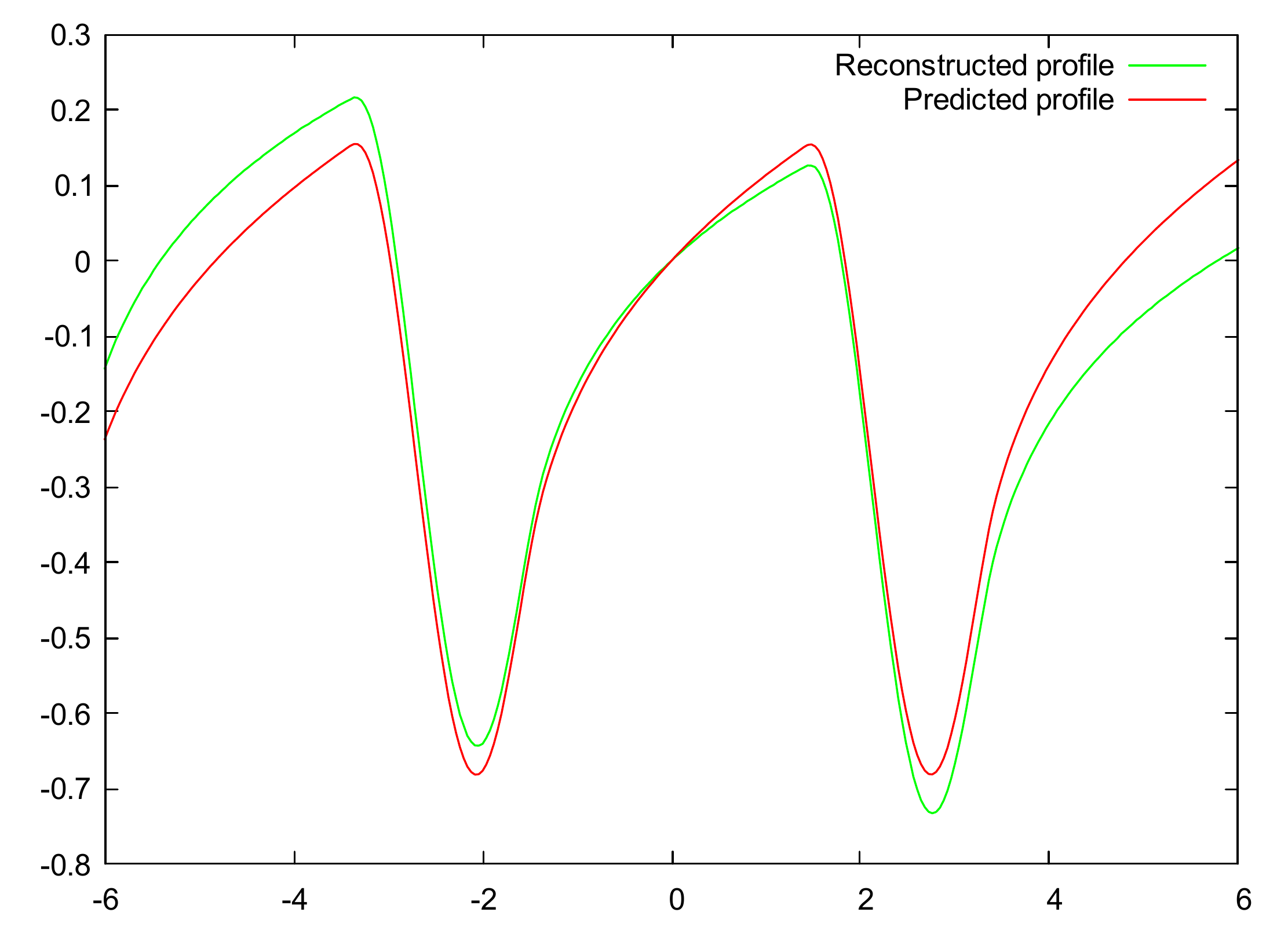}, height=3.5cm}}
}}}}
\caption{Limited angle GRT reconstruction: $\e=0.01$, $N_{\al}=500$. Left: global reconstruction, middle: reconstruction inside the square ROI shown on the left, right: profiles of the reconstruction (green) and prediction (red) along the line segment $x=x_0+\e h\vec\Theta$, $|h|\le 6$, shown in the middle. The variable $h$ is on the horizontal axis.}
\label{fig:GRT_N=500}
\end{figure}

\begin{figure}[h]
{\centerline{
{\hbox{
{\epsfig{file={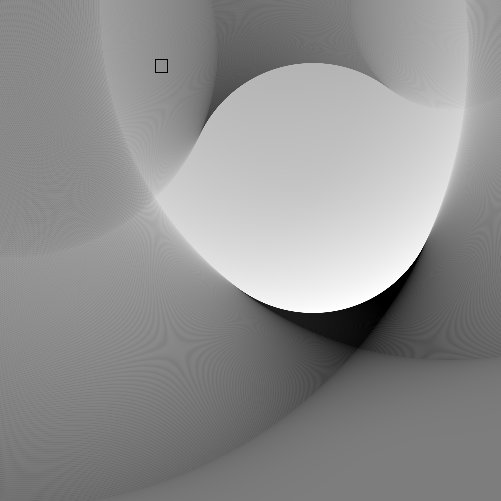}, height=3.5cm}}
{\epsfig{file={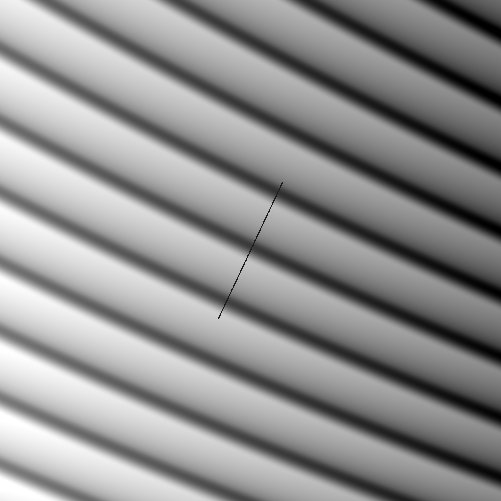}, height=3.5cm}}
{\epsfig{file={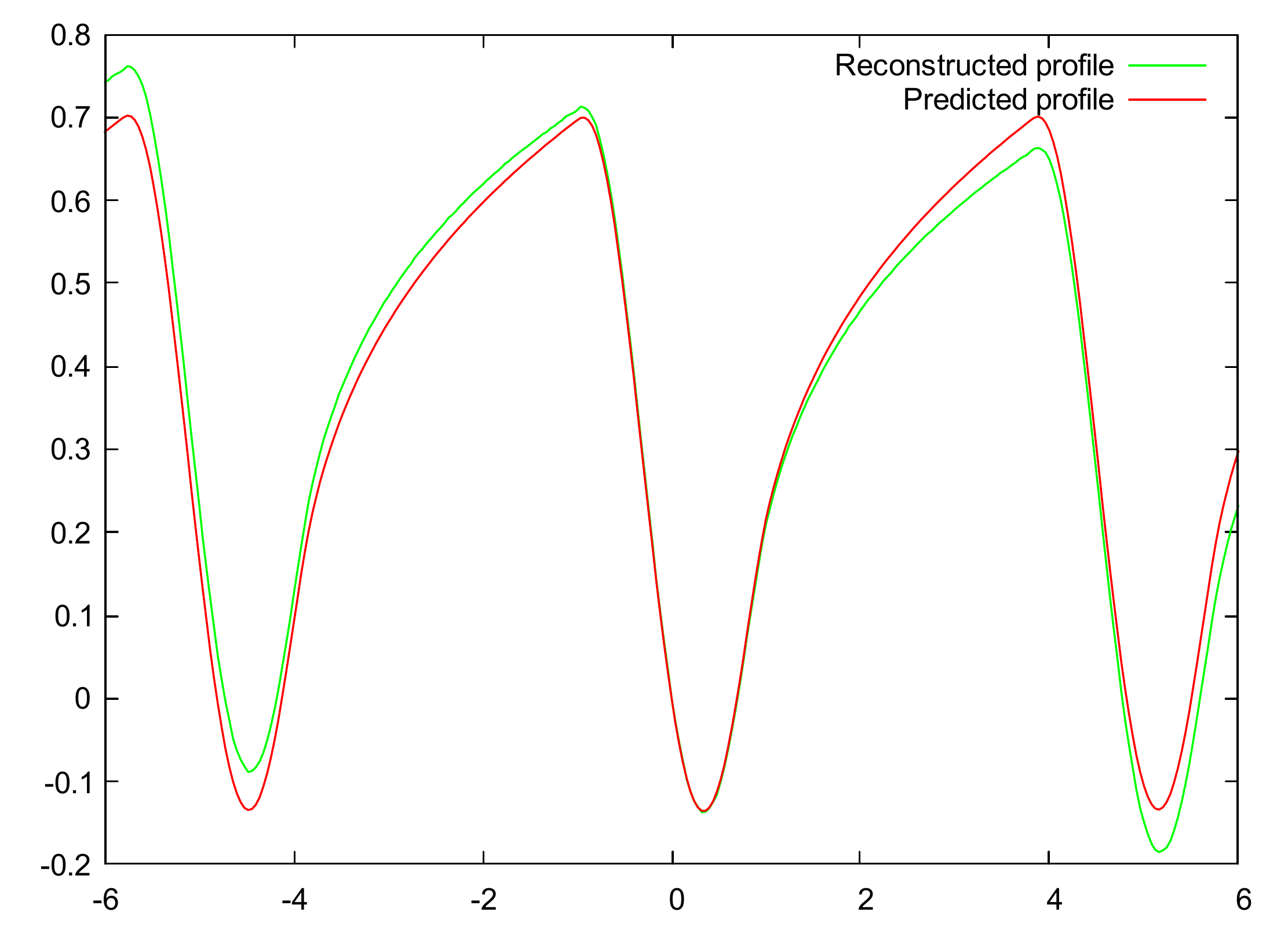}, height=3.5cm}}
}}}}
\caption{Limited angle GRT reconstruction: $\e=0.005$, $N_{\al}=1000$. Left: global reconstruction, middle: reconstruction inside the square ROI shown on the left, right: profiles of the reconstruction (green) and prediction (red) along the line segment $x=x_0+\e h\vec\Theta$, $|h|\le 6$, shown in the middle. The variable $h$ is on the horizontal axis.}
\label{fig:GRT_N=1000}
\end{figure}

\appendix

\section{Proofs of lemmas}\label{sec:proofs of lemmas}

\subsection{Proof of Lemma~\ref{lem:two Ps}}\label{sec:Psm}
The property $\mP(\alst )=\pst$ follows from assumption~\ref{ass:Phi}(2). Recall that $H(y)=0$ is a local equation of $\s$ 
(cf. \eqref{Phi eqs} and the paragraph preceding it). To find $\mP(\al)$, we solve
\be\label{eq for P}
H(y)=0,\ \la\dd H(y)=\dd_y\mps(\al,y)
\ee
for $y\in\s$ and \bt{$\la$} in terms of $\al$ near $(\bt{\la=1,}y=y_0,\al=\alst)$ and then set $\mP(\al)=\mps(\al,y(\al))$. Assumptions~\ref{ass:Phi}(1, 2, 4) and the Implicit Function Theorem imply that $y(\al)$ and, therefore, $\mP(\al)$ are smooth \bt{in a small neighborhood $\Omega\ni\alst$.} Since $y’(\al)$ is tangent to $\s$, using the second equation in \eqref{eq for P} gives $\mP’(\alst)=\pa_\al\mps(\alst,y_0)$.

\subsection{Proof of Lemma~\ref{lem:first res}}\label{ssec:main term}

Denote
\be\label{H def}
H(x,\al,\e):=\frac{\mps(\al,x)-\mP(\al)}\e,\ x=x_0+\e\check x,\al\in\Omega.
\ee
Since $\mu_0\not=0$ (cf. \eqref{two derivs 1}), we have $|\mps(\al,x_0)-\mP(\al)|\ge c|\al-\alst |$ for any $\al\in\Omega$ and some $c>0$. Hence 
\be\label{H ineq}
|H(x,\al_k,\e)|\ge c_1|k-\kst|,  \text{ for all } |\check x|\le c,\al_k\in\Omega,\bt{|k-\kst|\ge c_2,}
\ee
for some $c,c_1,c_2>0$, and all $\e>0$ sufficiently small. From \eqref{recon-1},
\be\label{recon-2 aux}\begin{split}
&\freco(x_0+\e\check x)-\freco(x_0)=-\frac{\Delta\al}{2\pi\e^{1/2}}\bigl(J+O(\e^{1/2})\bigr),\\
&J:=\sum_{\al_k\in\Omega} \omega(\al_k,x_0)\varphi_1(\al_k)
\left[\psi\left(H(x_0+\e\check x,\al_k,\e)\right)-\psi\left(H(x_0,\al_k,\e)\right)\right].
\end{split}
\ee
The $O(\e^{1/2})$ term in parentheses on the right in \eqref{recon-2 aux} denotes the contribution, which arises due to the $x$-dependence of $\om$ in \eqref{recon-1}. Here we use \eqref{psi props} with $n=0$, \eqref{H ineq}, and that for some $c$ and all $\check x$ in a bounded set:
\be
|\om(\al,x_0+\e\check x)-\om(\al,x_0)|\le c\e,\ |\varphi_1(\al)|\le c,\ \al\in\Omega,
\ee
hence\bt{
\be
O(\e)\biggl(O(1)+\sum_{c_2\le|k-\kst|\le O(1/\e)}|k-\kst|^{-1/2}\biggr)=O(\e^{1/2}).
\ee}

From \eqref{two derivs 1},
\be\label{psi arg x}\begin{split}
H(x_0+\e\check x,\al,\e)&=\bt{H(x_0,\al,\e)+\dd_x \mps(\al,x_0) \check x+O(\e)}\\
&=H(x_0,\al,\e)+u_0\cdot \check x+O(\e+|\al-\alst |).
\end{split}
\ee
 Also, $|\omega(x_0,\al)\varphi_1(\al)|\le c$ for some $c$ and all $\al\in\Omega$. Therefore, by \eqref{psi props} with $n=1$ and \eqref{H ineq},
\be\label{del del psi}\begin{split}
\sum_{\al_k\in\Omega}&\omega(\al_k,x_0)\varphi_1(\al_k)\bigl[\psi(H(x_0+\e\check x,\al_k,\e))-\psi(H(x_0,\al_k,\e)+u_0\cdot \check x)\bigr]\\
&=\sum_{|k|\le O(1/\e)}\psi’(H(x_0,\al_k,\e)+O(1))O(\e+\e|k-\kst |)\\
&=O(\e)\biggl(1+\sum_{\bt{c_2\le}|k-\kst|\le O(1/\e)}\frac{1+|k-\kst |}{|k-\kst |^{3/2}}\biggr)=O(\e^{1/2}).
\end{split}
\ee
\bt{Here we use that $w’\in L^q(\br)$, $q>2$ (see assumption~\ref{ass:w}(1)), so $\psi’$ is continuous. 
This shows that if $w$ does not have the required smoothness (e.g., if $w$ is the characteristic function of a detector pixel), the magnitude of the expression in \eqref{del del psi} may turn out to be much larger, leading to a slower rate of convergence in Theorem~\ref{main res} (or even to a breakdown of the convergence altogether).}

From \eqref{recon-2 aux}, \eqref{psi arg x}, and \eqref{del del psi},
\be\label{main expr v2}\begin{split}
&J=\sum_{\al_k\in\Omega} \omega(\al_k,x_0)\varphi_1(\al_k)\Delta\psi(H(x_0,\al_k,\e))+O(\e^{1/2}),\\ &\Delta\psi(t):=\psi(t+u_0\cdot \check x)-\psi(t).
\end{split}
\ee
Furthermore,
\be\label{psi arg 1}
H(x_0,\al_k,\e)=\mu_0\frac{\al_k-\alst }\e+R_k,\ 
R_k=O(\e(k-\kst )^2).
\ee
Denote, for simplicity, $a_k=\mu_0\kappa(k-\kst )$. Then
\be\label{psi simpl 2}
\Delta\psi(a_k+R_k)-\Delta\psi(a_k)=R_k\Delta\psi’(a_k+\xi_k),
\ee
where $|\xi_k|\le |R_k|$.
We can assume that $\Omega$ is sufficiently small, so that
\be
|\mu_0(\al_k-\alst )+\e R_k|\ge c|\al_k-\alst |,\ \forall \al_k\in\Omega,
\ee
for some $c>0$. Dividing by $\e$ implies  
\be\label{arg est}
\left|a_k+R_k\right|\ge c\kappa|k-\kst |,\ \forall\al_k\in\Omega,
\ee
with the same $c$. Using \eqref{psi props} with $n=2$ gives
\be\label{another del}\begin{split}
\sum_{\bt{c\le}|k-\kst|\le O(1/\e)}&\omega(\al_k,x_0)\varphi_1(\al_k)\bigl[\Delta\psi(a_k+R_k)-\Delta\psi(a_k)\bigr]\\
&=\sum_{\bt{c\le} |k-\kst|\le O(1/\e)}\frac{O(\e|k-\kst |^2)}{1+|k-\kst |^{5/2}}=O(\e^{1/2}),
\end{split}
\ee
\bt{for some $c>0$ sufficiently large. The requirement $|k-\kst|\ge c$ is needed, because $\psi^{\prime\prime}(\hat q)$, on which the estimate \eqref{another del} is based, may not exist for $\hat q$ in a compact set when $w’\in L^q$. To estimate the remaining finitely many terms without appealing to the second derivative we write
\be\label{fin many}\begin{split}
&\Delta\psi(a_k+R_k)-\Delta\psi(a_k)\\
&=\bigl[\psi(a_k+u_0\cdot \check x+R_k)-\psi(a_k+u_0\cdot \check x)\bigr]-\bigl[\psi(a_k+R_k)-\psi(a_k)\bigr]\\
&=O(\e),\ |k-\kst|\le c.
\end{split}
\ee
This follows, because $\psi’$ is continuous on all of $\br$, and $R_k=O(\e)$ whenever $|k-\kst|\le c$ (cf. \eqref{psi arg 1}). This is another place where we use that $w’\in L^q$. If $w$ is not sufficiently smooth, the quantity in \eqref{fin many} may turn out to be much larger.}

It is clear that all the big-$O$ terms are uniform with respect to $\check x$ (and, hence, $h$) restricted to a bounded set. Combining \eqref{recon-2 aux}, \eqref{main expr v2}, \eqref{another del}, \bt{and \eqref{fin many}} finishes the proof.

\subsection{Proof of Lemma~\ref{lem:diff phi3}}\label{sec:ramp phi}

Denote
\be\label{diff alt}
J:=\int  s^{-1} \left[\varphi_3’(s+q+\Delta q)-\varphi_3’(s+q)\right] \dd s,
\ee
where we omitted the dependence on $\al$ for simplicity. All the big-$O$ terms in this subsection are uniform with respect to $\al\in\Omega$. 
Restricting the integral in \eqref{diff alt} to $|s|\le 1$ we find
\be\label{diff alt 1}
J_1:=\int_{|s|\le 1}  s^{-1} \left(\left[\varphi_3’(s+q+\Delta q)-\varphi_3’(q+\Delta q)\right]-\left[\varphi_3’(s+q)-\varphi_3’(q)\right]\right) \dd s. 
\ee
Clearly, $J_1=O(|\Delta q|)$ uniformly in $|q|\le c$. Here we have used that $\varphi_3$ is smooth, so its third order derivative is bounded on compact sets. By \eqref{f3 props}, $\varphi_3^{\prime\prime}(p) =O(|p|^{-3/2}),\ p\to\infty$. Hence
\be\label{diff J2}
J_2:=\int_{|s|\ge 1}  s^{-1} \left[\varphi_3’(s+q+\Delta q)-\varphi_3’(s+q)\right] \dd s=O(|\Delta q|)
\ee
uniformly in $|q|\le c$. Combining the estimates for $J_{1,2}$ proves the lemma.

\subsection{Proof of Lemma~\ref{lem:aux conv}}\label{sec:aux conv} \color{black}
The Euler-MacLauren formula reads as follows \cite[eq. (25.7)]{Kac2002}:
\be\label{EML}\begin{split}
\sum_{k=a}^{b-1} f(k)=&\int_a^b f(t)\dd t+\sum_{m=1}^{N’}\frac{b_m}{m!}(f^{(m-1)}(b)-f^{(m-1)}(a))\\
&-\int_a^b \frac{B_{N’}(\{1-t\})}{N’!} f^{(N’)}(t)\dd t.
\end{split}
\ee
Here $b>a$ are integers, $B_m$ and $b_m$ are Bernoulli polynomials and numbers, respectively, $\{t\}=t-\lfloor t\rfloor$ is the fractional part of $t\in\br$, and $\lfloor t\rfloor$ is the floor function, i.e. the largest integer not exceeding $t$. 

Substituting $f(t)=g(\e t)$, taking the limit as $a\to-\infty$, $b\to\infty$ (which is allowed due to the decay of $g$ and its derivatives), changing variables $\tau=\e t$, and using that $g^{(N’)}\in L^1(\br)$, we finish the proof. \color{black}

\bibliographystyle{ieeetr}
\bibliography{My_Collection}

\begin{thebibliography}{10}

\bibitem{eps08}
C.~L. Epstein, {\em {Introduction to the mathematics of medical imaging}}.
\newblock Philadelphia: SIAM, second~ed., 2008.

\bibitem{LAH2007}
B.~Li, G.~B. Avinash, and J.~Hsieh, ``{Resolution and noise trade-off analysis
  for volumetric CT},'' {\em Medical Physics}, vol.~34, no.~10, pp.~3732--3738,
  2007.

\bibitem{Grimmer2008}
R.~Grimmer, J.~Krause, M.~Karolczak, R.~Lapp, and M.~Kachelriess, ``{Assessment
  of spatial resolution in CT},'' {\em IEEE Nuclear Science Symposium
  Conference Record}, pp.~5562--5566, 2008.

\bibitem{FFS2013}
S.~N. Friedman, G.~S. Fung, J.~H. Siewerdsen, and B.~M. Tsui, ``{A simple
  approach to measure computed tomography (CT) modulation transfer function
  (MTF) and noise-power spectrum (NPS) using the American College of Radiology
  (ACR) accreditation phantom},'' {\em Medical Physics}, vol.~40, no.~5,
  pp.~1--9, 2013.

\bibitem{kr89}
H.~Kruse, ``{Resolution of Reconstruction Methods in Computerized
  Tomography},'' {\em SIAM Journal on Scientific and Statistical Computing},
  vol.~10, pp.~447--474, 1989.

\bibitem{des93}
L.~Desbat, ``{Efficient sampling on coarse grids in tomography},'' {\em Inverse
  Problems}, vol.~9, pp.~251--269, 1993.

\bibitem{nat93}
F.~Natterer, ``{Sampling in Fan Beam Tomography},'' {\em SIAM Journal on
  Applied Mathematics}, vol.~53, pp.~358--380, 1993.

\bibitem{nat95}
F.~Natterer, ``{Sampling and resolution in {CT}},'' in {\em Proceedings of the
  Fourth International Symposium (CT-93): Novosibirsk, 1993} (M.~M.
  Lavrent{\'{e}}v, ed.), pp.~343--354, Utrecht: VSP, 1995.

\bibitem{pal95}
V.~P. Palamodov, ``{Localization of harmonic decomposition of the {Radon}
  transform},'' {\em Inverse Problems}, vol.~11, pp.~1025--1030, 1995.

\bibitem{cb97}
A.~Caponnetto and M.~Bertero, ``{Tomography with a finite set of projections:
  singular value decomposition and resolution},'' {\em IEEE Transactions on
  Information Theory}, vol.~13, pp.~1191--1205, 1997.

\bibitem{fr00}
A.~Faridani and E.~Ritman, ``{High-resolution computed tomography from
  efficient sampling},'' {\em Inverse Problems}, vol.~16, pp.~635--650, 2000.

\bibitem{far04}
A.~Faridani, ``{Sampling theory and parallel-beam tomography},'' in {\em
  Sampling, wavelets, and tomography}, vol.~63 of {\em Applied and Numerical
  Harmonic Analysis}, pp.~225--254, Boston, MA: Birkhauser Boston, 2004.

\bibitem{rs07}
A.~Rieder and A.~Schneck, ``{Optimality of the fully discrete filtered
  backprojection algorithm for tomographic inversion},'' {\em Numerische
  Mathematik}, vol.~108, pp.~151--175, 2007.

\bibitem{izen12}
S.~H. Izen, ``{Sampling in Flat Detector Fan Beam Tomography},'' {\em SIAM
  Journal on Applied Mathematics}, vol.~72, pp.~61--84, 2012.

\bibitem{stef20}
P.~Stefanov, ``{Semiclassical sampling and discretization of certain linear
  inverse problems},'' {\em SIAM Journal of Mathematical Analysis}, vol.~52,
  pp.~5554--5597, 2020.

\bibitem{Monard2021}
F.~Monard and P.~Stefanov, ``{Sampling the X-ray transform on simple
  surfaces},'' {\em ArXiv ID:2110.05761}, 2021.

\bibitem{Stef_angles_2022}
P.~Stefanov, ``{The Radon transform with finitely many angles},'' {\em
  arXiv:2208.05936v1}, pp.~1--30, 2022.

\bibitem{Katsevich2017a}
A.~Katsevich, ``{A local approach to resolution analysis of image
  reconstruction in tomography},'' {\em SIAM Journal on Applied Mathematics},
  vol.~77, no.~5, pp.~1706--1732, 2017.

\bibitem{kat19a}
A.~Katsevich, ``{Analysis of reconstruction from discrete {Radon transform data
  in $\mathbb R^3$} when the function has jump discontinuities},'' {\em SIAM
  Journal on Applied Mathematics}, vol.~79, pp.~1607--1626, 2019.

\bibitem{Katsevich2020b}
A.~Katsevich, ``{Analysis of resolution of tomographic-type reconstruction from
  discrete data for a class of distributions},'' {\em Inverse Problems},
  vol.~36, no.~12, 2020.

\bibitem{Katsevich2020a}
A.~Katsevich, ``{Resolution analysis of inverting the generalized Radon
  transform from discrete data in $\mathbb R^3$},'' {\em SIAM Journal of
  Mathematical Analysis}, vol.~52, no.~4, pp.~3990--4021, 2020.

\bibitem{Katsevich2021a}
A.~Katsevich, ``{Resolution analysis of inverting the generalized
  $N$-dimensional Radon transform in $\mathbb R^n$ from discrete data},'' {\em
  Journal of Fourier Analysis and Applications}, vol.~29, art. 6, 2023.

\bibitem{Katsevich2021b}
A.~Katsevich, ``{Resolution of 2D reconstruction of functions with nonsmooth
  edges from discrete Radon transform data},'' {\em SIAM Journal on Applied
  Mathematics}, vol.~to appear, 2023.

\bibitem{Katsevich2022a}
A.~Katsevich, ``{Novel resolution analysis for the Radon transform in $\mathbb
  R^2$ for functions with rough edges},'' {\em SIAM Journal of Mathematical
  Analysis}, vol.~to appear, 2023.

\bibitem{JS1980}
P.~M. Joseph and R.~A. Schulz, ``{View sampling requirements in fan beam
  computed tomography},'' {\em Medical Physics}, vol.~7, no.~6, pp.~692--702,
  1980.

\bibitem{Kuchment2005}
P.~Kuchment, ``{Generalized transforms of Radon type and their applications},''
  in {\em The Radon Transform, Inverse Problems, and Tomography}
  (G.~{\'{O}}lafsson and E.~T. Quinto, eds.), pp.~67--91, Providence, R.I.:
  American Mathematical Society, 2005.

\bibitem{AmbQ2020}
G.~Ambartsoumian and E.~T. Quinto, ``{Generalized Radon transforms and
  applications in tomography},'' {\em Inverse Problems}, vol.~36, no.~2,
  p.~020301, 2020.

\bibitem{nat3}
F.~Natterer, {\em {The Mathematics of Computerized Tomography}}.
\newblock Philadelphia: SIAM, 2001.

\bibitem{Beylkin1984}
G.~Beylkin, ``{The inversion problem and applications of the generalized Radon
  transform},'' 1984.

\bibitem{kat10b}
A.~Katsevich, ``{An accurate approximate algorithm for motion compensation in
  two-dimensional tomography},'' {\em Inverse Problems}, vol.~26, 2010.

\bibitem{qu-80}
E.~T. Quinto, ``{The dependence of the generalized Radon transforms on defining
  measures},'' {\em Transactions of the American Mathematical Society},
  vol.~257, pp.~331--346, 1980.

\bibitem{rz1}
A.~G. Ramm and A.~I. Zaslavsky, ``{Singularities of the {Radon} transform},''
  {\em Bull. Amer. Math. Soc.}, vol.~25, pp.~109--115, 1993.

\bibitem{rz2}
A.~G. Ramm and A.~I. Zaslavsky, ``{Reconstructing singularities of a function
  given its {Radon} transform},'' {\em Math. and Comput. Modelling}, vol.~18,
  no.~1, pp.~109--138, 1993.

\bibitem{Muskh1958}
N.~I. Muskhelishvili, {\em {Singular Integral Equations. Boundary problems of
  functions theory and their applications to mathematical physics.}}
\newblock Dordrecht: Springer, 1958.

\bibitem{Rudin_real}
W.~Rudin, {\em {Real and Complex Analysis}}.
\newblock London: McGraw-Hill, 1970.

\bibitem{gakhov}
F.~D. Gakhov, {\em {Boundary Value Problems}}.
\newblock Oxford: Pergamon Press, 1966.

\bibitem{pbm1}
A.~P. Prudnikov, Y.~A. Brychkov, and O.~I. Marichev, {\em {Integrals and
  series. Volume 1. Elementary functions}}.
\newblock New York: Gordon and Breach, 1986.

\bibitem{Kac2002}
V.~Kac and P.~Cheung, {\em {Quantum Calculus}}.
\newblock New York, NY: Springer, 2002.

\bibitem{Nemes2017}
G.~Nemes, ``{Error bounds for the asymptotic expansion of the Hurwitz zeta
  function},'' {\em Proceedings of the Royal Society A: Mathematical, Physical
  and Engineering Sciences}, vol.~473, no.~2203, 2017.

\end{thebibliography}
\end{document}